\documentclass[reqno, 11pt]{amsart}
\pdfoutput=1 

\usepackage[usenames, dvipsnames]{xcolor}

\usepackage{amsmath,amssymb,amsthm,mathtools}
\usepackage{tikz}
\usetikzlibrary{automata, positioning,bbox}
\usepackage[T1]{fontenc}
\usepackage[utf8]{inputenc}
\usepackage{enumerate}
\usepackage[letterpaper, margin=1in]{geometry}
\usepackage[noBBpl]{mathpazo} 

\usepackage[longnamesfirst,numbers,sort&compress]{natbib}
\makeatletter
\def\NAT@spacechar{~}
\makeatother

\usepackage{thmtools}
\usepackage{thm-restate}
\usepackage{hyperref}
\hypersetup{
colorlinks,
linkcolor={cDeepBlue},
citecolor={green!60!black},
urlcolor={blue!80!black},
pdftitle = {Aharoni's rainbow cycle conjecture holds up to an additive constant},
pdfauthor = {}
}
\usepackage[compress, capitalise, nameinlink, noabbrev]{cleveref} 

\declaretheorem[name = Theorem, numberwithin = section, style = plain, refname = {Theorem,Theorems}, Refname = {Theorem,Theorems}]{theorem}
\declaretheorem[name = Claim, numberlike = theorem, style = plain, refname = {Claim,Claims}, Refname = {Claim,Claims}]{claim}

\declaretheorem[name = Corollary, numberlike = theorem, style = plain, refname = {Corollary,Corollaries}, Refname = {Corollary,Corollaries}]{corollary}

\declaretheorem[name = Lemma, numberlike = theorem, style = plain, refname = {Lemma,Lemmas}, Refname = {Lemma,Lemmas}]{lemma}

\declaretheorem[name = Conjecture, numberlike = theorem, style = plain, refname = {Conjecture,Conjectures}, Refname = {Conjecture, Conjetures}]{conjecture}

\newcommand{\boundellipse}[3]
{(#1) ellipse (#2 and #3)
}

\newcommand{\CH}{Caccetta-Häggkvist }

\newcommand{\R}{\mathbb{R}}

\newcommand{\defn}[1]{\textcolor{cMaroon}{\emph{#1}}}
\newcommand{\df}{\mathsf{def}}
\renewcommand{\geq}{\geqslant}
\renewcommand{\leq}{\leqslant}

\DeclarePairedDelimiter{\abs}{\lvert}{\rvert}


\definecolor{cMaroon}{HTML}{93152a}

\definecolor{cYellow}{HTML}{f0e442}
\definecolor{cOrange}{HTML}{f7903b}
\definecolor{cRed}{HTML}{ff3341}
\definecolor{cPurple}{HTML}{da58c2}
\definecolor{cIndigo}{HTML}{8a58da}
\definecolor{cDeepBlue}{HTML}{4a8ce8}
\definecolor{cLightBlue}{HTML}{43e0ef}
\definecolor{cGreen}{HTML}{3ef450}


\tikzset{every node/.style={
    fill=black,
    draw=black,
    text=white,
    inner sep=1pt,
    minimum width=5mm,
    font=\small,
    line width=1.5pt,
    circle
}}
\tikzset{not_vert/.style={
    fill=white,
    text=black,
    draw opacity=0,
    minimum width=0
}}
\tikzset{every path/.style={
    line width=2pt
}}

\pretolerance=10000

\begin{document}
\title{Aharoni's rainbow cycle conjecture holds up to an additive constant}
\author{Patrick Hompe}
\author{Tony Huynh}
\address[Patrick Hompe]{}
\email{patrickhompe@gmail.com}
\address[Tony Huynh]{Dipartimento di Informatica, Sapienza Università di Roma, Italy}
\email{huynh@di.uniroma1.it}

\address[]{}
\email{}
\thanks{}
\begin{abstract}
In 2017, Aharoni proposed the following generalization of the Caccetta-H\"{a}ggkvist conjecture: if $G$ is a simple $n$-vertex edge-colored  graph with $n$ color classes of size at least $r$, then $G$ contains a rainbow cycle of length at most $\lceil n/r \rceil$. 

In this paper, we prove that, for fixed $r$, Aharoni's conjecture holds up to an additive constant. Specifically, we show that 
for each fixed $r \geq 1$, there exists a constant $\alpha_r \in O(r^5 \log^2 r)$ such that if $G$ is a simple $n$-vertex edge-colored graph with $n$ color classes of size at least $r$, then $G$ contains a rainbow cycle of length at most $n/r + \alpha_r$.
\end{abstract}
\keywords{}
\maketitle

\section{Introduction}
A digraph $D$ is \defn{simple} if for all distinct vertices $u,v \in V(D)$, there is at most one arc from $u$ to $v$.
Perhaps the most famous open problem concerning digraphs is the following conjecture of 
Caccetta and H\"aggkvist from 1978.

\begin{conjecture}[\cite{CH78}] \label{conj-ch}
Let $D$ be a simple $n$-vertex digraph with minimum out-degree at least $r$. Then $D$ contains a directed cycle of length at most $\lceil n/r \rceil$.
\end{conjecture}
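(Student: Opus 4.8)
\Cref{conj-ch} is one of the most notorious open problems in combinatorics, so rather than a full proof I will outline the strategies I would attempt and flag where each one stalls. The only regime that is elementary is $r=1$: starting at any vertex and repeatedly following an out-arc, the pigeonhole principle produces a closed directed walk, hence a directed cycle, and trivially of length at most $n = \lceil n/1 \rceil$. For $r \geq 2$ no comparably simple argument is known, and already the single ``extremal'' case $r = n/3$ (which predicts a directed triangle) is open.

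The first thing I would try is the ball-growth/averaging method behind the known additive-constant bounds (Chv\'atal--Szemer\'edi, Shen, and in spirit the present paper). Assume $D$ has minimum out-degree at least $r$ and no directed cycle of length at most $g$. For a vertex $v$, let $B_k(v)$ be the set of vertices reachable from $v$ by a directed path of length at most $k$. Using simplicity together with the girth hypothesis, one argues that adding one more step to the path forces at least $r$ ``new'' vertices on average, so that $|B_{g-1}(v)|$ grows roughly like $r(g-1)$ and hence $n \geq r(g-1) - O(1)$, i.e.\ $g \leq n/r + O(1)$. Turning this $O(1)$ slack into the exact value $\lceil n/r\rceil$ is precisely the conjecture and is the main obstacle: the counting is lossy at the boundary because a directed path can revisit an already-counted vertex without creating a cycle shorter than $g$, and controlling that interaction tightly -- uniformly in $r$ -- is exactly what nobody knows how to do.

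A second line of attack is to reduce the general statement to the extremal ratio $n/r = 3$. Given a hypothetical counterexample with out-degree $r$ and $n > 3r$, one would like to perform local surgery (deleting vertices, contracting arcs, or a blow-up/identification) that decreases $n$ faster than it decreases $r$ while preserving both simplicity and the absence of short cycles, driving $n/r$ down to $3$. The difficulty is that these operations are hard to control: contracting an arc can create a short cycle, and deleting a vertex lowers the out-degree of each of its in-neighbours, so one needs a cleverly chosen potential function to guarantee progress, and I do not see how to design one. Failing a reduction, I would attack $r = n/3$ directly via flag algebras: encode ``triangle-free digraph with large minimum out-degree'' as a semidefinite program over flag densities and try to push the bound down to $1/3$. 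Here the obstacle is known to be real -- the current best certificates stop strictly above $n/3$ (Hamburger--Haxell--Kostochka, Hladk\'y--Kr\'a\v{l}--Norin, giving out-degree $\approx 0.3465\,n$), so the relaxation alone cannot close the gap.

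Finally, in the regime $r$ close to $n/2$ the spectral method works: if $D$ is triangle-free then its adjacency matrix $A$ satisfies $A \circ A^{\top} = 0$ while $\langle A\mathbf{1},\mathbf{1}\rangle \geq rn$, and one derives a contradiction from an upper bound on the spectral radius of $A + A^{\top}$; but this degrades as $r/n \to 1/3$ and says nothing about long cycles for small $r$. In summary, every known approach delivers \Cref{conj-ch} only up to an additive constant, a multiplicative constant, or in a restricted range of $r$, and the essential missing ingredient is an argument that is tight at girth exactly $\lceil n/r\rceil$ rather than merely within $O(1)$ of it; I would consider the real breakthrough to be such a ``boundary-tight'' counting scheme, and I do not have one.
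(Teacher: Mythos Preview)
Your assessment is correct: \Cref{conj-ch} is the Caccetta--H\"aggkvist conjecture, stated in the paper as an open problem, and the paper does not prove it. There is no ``paper's own proof'' to compare against. The paper only establishes the (well-known) implication that Aharoni's conjecture (\cref{conj-aharoni}) would imply \cref{conj-ch}, via the standard reduction of colouring each arc by its tail; but since Aharoni's conjecture is itself open, this yields no unconditional proof.

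Your survey of partial approaches is accurate and well-calibrated. The ball-growth method you describe is indeed what underlies Shen's additive-constant result (\cref{thm-shen1}), and the paper uses precisely this ingredient (via \cref{thm:shen} and \cref{cor:shen}) in the ``few non-star'' case of its main argument. The flag-algebra and spectral remarks are also correct as context, though they play no role in the present paper. So there is no gap to name here: you correctly declined to claim a proof of an open conjecture, and your explanation of where each method stalls matches the current state of the literature.
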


Although the \CH{}conjecture has attracted considerable attention over the years, proving it still remains out of reach. We highlight here one partial result of~\citet{Shen02}, which is in the spirit of our paper.

\begin{theorem}\label{thm-shen1}
Let $D$ be a simple $n$-vertex digraph with minimum out-degree at least $r$. Then $D$ contains a directed cycle of length at most $\lceil n/r \rceil + 73$.
\end{theorem}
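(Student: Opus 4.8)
The statement asserts exactly that the girth $g = g(D)$ (which is finite: with $r \geq 1$ there is no sink, so $D$ has a directed cycle) satisfies $g \leq \lceil n/r\rceil + 73$, and that is what I would aim for. The starting point is a structural consequence of having no short directed cycle: for every vertex $v$ and every $\ell \leq g-1$, every directed walk of length $\ell$ out of $v$ is automatically a directed \emph{path}, because a repeated vertex would close a directed walk --- hence contain a directed cycle --- of length at most $\ell \leq g-1 < g$. I would record two corollaries immediately: greedily extending a directed path out of $v$ never gets stuck before length $g-1$ (each of the $\geq r$ out-neighbours of the current endpoint is new, else we close a short cycle), so there are at least $r^{\ell}$ directed paths of length $\ell$ from $v$ for every $\ell \leq g-1$; and, after fixing a breadth-first out-tree rooted at $v$, no vertex $u$ with $\dist(v,u)\leq g-2$ sends an arc onto its own tree-path from $v$ to $u$.

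The engine would be the Chv\'atal--Szemer\'edi method: convert these facts into growth estimates for the out-balls $S_i(v) := \{u : \dist(v,u)\leq i\}$. Minimum out-degree $\geq r$ forces each shell $N_i(v) := S_i(v)\setminus S_{i-1}(v)$ to emit at least $r\,|N_i(v)|$ arcs, all landing in $S_{i+1}(v)$, and the girth hypothesis limits how much those arcs can ``fold back'' onto earlier shells; together these give that $|S_i(v)|$ grows essentially linearly in $i$ with slope close to $r$ for all $i$ up to roughly $g$. I would then aggregate over all $n$ choices of root $v$ with a suitable weighting, and combine this with the trivial cap $|S_i(v)| \leq n$ (all the listed vertices are distinct), to conclude $g \leq \lceil n/r\rceil + O(1)$. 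This is essentially the route along which Chv\'atal--Szemer\'edi first obtained an additive constant ($2500$), later improved by Nishimura (to $304$).

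To bring the constant down to $73$ I would revisit every inequality above looking for slack: optimise the weight function used in the aggregation; replace the crude fold-back bound for a shell's out-arcs by the sharp estimate that the breadth-first-tree analysis actually yields, keeping the lower-order error terms instead of discarding them; and dispose of the finitely many residual small-girth or small-$r$ configurations by a separate argument. The step I expect to be the main obstacle is precisely this overlap control: minimum out-degree gives no information whatsoever about in-degrees, so the out-neighbourhoods of the vertices in one shell can overlap heavily, and the only brake on this is the girth. Extracting from ``no directed cycle of length $\leq g-1$'' a quantitative bound on the overlap that is tight enough to cost only an additive $73$ --- rather than the much larger constants that cruder estimates produce --- is the technical heart of the proof.
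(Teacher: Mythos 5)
The paper does not prove this theorem; it is quoted verbatim as a black box from Shen's 2002 paper and used only as a point of reference (and, in the form of \cref{thm:shen} / \cref{cor:shen}, as an ingredient in \cref{lemma-fewstars}). So there is no in-paper proof to compare you against. That said, your proposal does correctly locate the result in the Chv\'atal--Szemer\'edi / Nishimura / Shen lineage, and the preliminary observations you list (short out-walks are paths when the girth is large, greedy extension never stalls, BFS shells emit $\geq r|N_i|$ arcs into the next ball) are the standard and correct starting points.

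The trouble is that what you have written is a plan, not a proof, and you say so yourself: the paragraph beginning ``The step I expect to be the main obstacle\dots'' explicitly defers the one estimate on which everything hinges --- a quantitative overlap/fold-back bound for the out-arcs of a shell, strong enough that the aggregated ball-growth inequality loses only an additive $O(1)$ rather than a multiplicative factor. Without that lemma, the claim ``$|S_i(v)|$ grows essentially linearly in $i$ with slope close to $r$'' is an assertion, not a deduction; in fact, with no in-degree control, nothing in the hypotheses rules out the out-arcs of $N_i(v)$ concentrating on a few vertices, and the girth hypothesis is the \emph{only} thing preventing that --- exactly the mechanism you have not pinned down. Likewise ``aggregate over all $n$ roots with a suitable weighting'' and ``optimise the weight function'' name the shape of the argument without supplying the inequalities, and the passage from there to the specific constant $73$ (as opposed to Chv\'atal--Szemer\'edi's $2500$ or Nishimura's $304$) is entirely unaccounted for. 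In short: right framework, correct preliminary lemmas, but the quantitative core --- the very part that makes this a theorem with constant $73$ rather than a heuristic --- is missing.
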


Let $G$ be an edge-colored graph.  We say that $G$ is \defn{simple} if no color class contains parallel edges.  A subgraph of $G$ is \defn{rainbow} if it does not contain two edges of the same color.  
In this paper, we focus on the following generalization of the \CH{}conjecture, due to Aharoni~\cite{ADH19}.

\begin{conjecture}\label{conj-aharoni}
Let $G$ be a simple $n$-vertex edge-colored graph with $n$ color classes of size at least $r$. Then $G$ contains a rainbow cycle of length at most $\lceil n/r \rceil$.
\end{conjecture}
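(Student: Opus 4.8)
The starting observation is that \Cref{conj-aharoni} contains \Cref{conj-ch}: any counterexample to \Cref{conj-ch} has no digon (a digon is a directed $2$-cycle and $\lceil n/r\rceil\geq 2$), so coloring each arc $u\to v$ by the color $u$ turns such a digraph into a simple edge-colored graph whose $n$ color classes are stars of size equal to the out-degree, and a short check shows that every rainbow $k$-cycle of this colored graph is forced to be a directed $k$-cycle. The plan is to run this correspondence in reverse and appeal to \Cref{thm-shen1} (or, for the sharp constant, to \Cref{conj-ch} itself in place of \Cref{thm-shen1}). Fix $r$ and suppose for contradiction that $G$ is a simple $n$-vertex edge-colored graph with color classes $C_1,\dots,C_n$ of size at least $r$ and with no rainbow cycle of length at most $\lceil n/r\rceil$. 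Deleting edges from an over-sized class only strengthens the hypothesis, so assume $\abs{C_i}=r$ for all $i$; then $G$ has exactly $nr$ edges.

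For each color $i$, choose an orientation of $C_i$, replacing every $uv\in C_i$ by an arc $u\to v$ or $v\to u$, and let $D$ be the resulting oriented graph on $V(G)$, each arc labeled by the color it came from. Since $G$ is a simple graph, $D$ is a simple digraph with exactly $nr$ arcs, and a directed cycle of $D$ whose arc-labels are pairwise distinct is literally a rainbow cycle of $G$ of the same length. So it suffices to choose the orientations so that: (i) every vertex of $D$ has out-degree exactly $r$; and (ii) some shortest directed cycle of $D$ has pairwise distinct labels. Granting (i), \Cref{thm-shen1} yields a directed cycle of length at most $\lceil n/r\rceil+73$, which by (ii) is a rainbow cycle of that length; replacing \Cref{thm-shen1} by \Cref{conj-ch} improves the $+73$ to $+0$ and produces a rainbow cycle of length at most $\lceil n/r\rceil$, contradicting our assumption.

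The crux — and the reason the conjecture is still open — is that the correspondence which makes \Cref{conj-aharoni} contain \Cref{conj-ch} has no honest inverse. When the $C_i$ are arbitrary $r$-edge sets rather than stars, a vertex set $S$ carrying more than $r\abs{S}$ colored edges obstructs \emph{every} orientation meeting (i), so (i) can fail outright — and then one must either exploit such a color-dense $S$ to locate a short rainbow cycle inside it (or to recurse on a smaller instance), or, when no dense $S$ exists, invoke Hakimi's orientation theorem to obtain (i) for free. The realistic substitute for (i) in general is to take the orientation minimizing the total deficiency $\sum_v \max\{0,\,r-d^+_D(v)\}$, rebalance out-degrees along augmenting sequences of color classes while re-routing shortest directed cycles to kill label repetitions, and clean up a bounded residual set of deficient vertices by deleting or contracting a bounded neighborhood and inducting on $n$. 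Each such step leaks a polynomial-in-$r$ additive term, and summing these over the $O(\log r)$ rounds needed to equidistribute the out-degrees is exactly what stands between this plan and the exact bound $\lceil n/r\rceil$. Removing that leakage entirely — for instance via a rainbow-native analogue of Shen's layer-size inequalities applied directly to a BFS of $G$, or by certifying integrality of a linear-programming relaxation of rainbow girth on simple colored graphs — is the step I expect to be genuinely hard, and it is where a new idea is needed.
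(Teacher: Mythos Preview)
This statement is \cref{conj-aharoni}, which the paper presents as an \emph{open conjecture}; the paper does not prove it (the proof block immediately following it in the paper establishes only that \cref{conj-aharoni} implies \cref{conj-ch}). What the paper does prove is the weaker additive-constant result \cref{thm:main-gen}. Your proposal is likewise not a proof, and you say so explicitly: you isolate the obstruction---property (i) can fail whenever some $S\subseteq V(G)$ carries more than $r|S|$ colored edges---and close with ``this is where a new idea is needed.'' So there is no correct proof on either side to compare.

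Two concrete gaps beyond what you already flag. First, even when (i) is achievable, (ii) is not automatic: a shortest directed cycle of $D$ can use two arcs from the same non-star color class, so ``some shortest directed cycle has pairwise distinct labels'' requires an argument, not an assumption. Your later phrase ``re-routing \dots\ to kill label repetitions'' gestures at this but supplies no mechanism. Second, your rebalancing-and-cleanup sketch gives no reason why the number of rounds is $O(\log r)$ rather than growing with $n$; without that, the accumulated additive loss is unbounded and you do not even recover an additive-constant result.

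For comparison, the paper's route to the additive bound is structurally quite different from a global orientation-plus-Shen plan. It splits on whether there are many or few \emph{non-star vertices}. With many, a counting argument either yields a rainbow excess-$k$ subgraph (short cycle via \cref{cor:girth}) or a vertex incident to many colors, from which one grows a set with small rainbow diameter, contracts, and recurses. With few, the paper builds ``galaxies'' rooted at the star vertices, covers the non-star vertices by sets $T_j$ of controlled rainbow diameter via \cref{key-lemma}, and only then applies Shen (\cref{cor:shen}) to the leftover---precisely the region where every remaining color class \emph{is} a star, so the orientation step is honest there. Shen's inequality thus enters only in a carefully prepared subcase, not as a wholesale reduction.
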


It is well-known that Aharoni's conjecture implies the \CH{}conjecture (see for example~\cite{DDFGGHMM21} or~\cite{CGHI22}).  For completeness, we include the proof here. 
\begin{proof}[Proof of~\cref{conj-ch} assuming~\cref{conj-aharoni}]
    Let $D$ be a simple $n$-vertex digraph with minimum out-degree at least $r$. We create an edge-coloured graph $G$ from $D$ with $V(G)=V(D)$ as follows.  For each arc $(u,v) \in A(D)$ we create an edge $uv \in E(G)$ with color $u$.  Note that $G$ has $n$ color classes of size at least $r$ (since $D$ has minimum out-degree at least $r$).  Moreover, since $D$ is a simple digraph it follows that $G$ is a simple edge-colored graph. By~\cref{conj-aharoni}, $G$ contains a rainbow cycle of length at most $\lceil n/r \rceil$.  This rainbow cycle necessarily corresponds to a directed cycle in $D$.  
\end{proof}

Note that~\cref{conj-aharoni} holds when $r=1$, since we can choose one edge from each color class and use the easy fact that every $n$-vertex graph with $n$ edges contains a cycle.  The $r=2$ case of~\cref{conj-aharoni} was proven by~\citet{DDFGGHMM21}.

\begin{theorem}[\cite{DDFGGHMM21}]
Let $G$ be a simple $n$-vertex edge-colored graph with $n$ color classes of size at least 2. Then $G$ contains a rainbow cycle of length at most $\lceil n/2 \rceil$.
\end{theorem}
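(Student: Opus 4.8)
The goal is the $r=2$ case of Aharoni's conjecture: every simple $n$-vertex edge-colored graph with $n$ color classes of size at least $2$ has a rainbow cycle of length at most $\lceil n/2 \rceil$. The plan is to proceed by contradiction and induction on $n$, assuming $G$ is a counterexample with $n$ minimum. First I would clean up $G$: delete edges until each color class has size exactly $2$, delete isolated vertices, and observe that the minimum degree is at least $3$ (a vertex of degree $\le 2$ can be removed together with an incident color class, and one checks the inductive bound is maintained — here the choice of $\lceil n/2\rceil$ rather than $n/2$ is exactly what makes the parity bookkeeping work, since deleting one vertex and one color class takes $n \mapsto n-1$, $\#\text{colors} \mapsto n-1$, and $\lceil (n-1)/2\rceil \le \lceil n/2 \rceil$). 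So we may assume $G$ has $n$ vertices, $2n$ edges, $n$ color classes each a pair of edges, minimum degree $\ge 3$, and no rainbow cycle of length $\le \lceil n/2\rceil$.

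\textbf{Main structural step.} The heart of the argument is to orient or otherwise encode the color classes so that the Caccetta–Häggkvist-type counting becomes available, but adapted to the rainbow setting. Concretely, I would think of each color class $\{e,f\}$ as a gadget and build an auxiliary digraph: contract each color class appropriately, or more promisingly, use the ``rainbow'' analogue of the standard short-cycle argument via a BFS/level-function. Fix a vertex $v$ and grow a breadth-first structure where we are only allowed to traverse edges whose colors have not yet been used on the current path; the absence of a short rainbow cycle forces the ``rainbow ball'' around $v$ to grow, and the fact that every color class has two edges roughly doubles the branching. The key quantitative claim to establish is that if there is no rainbow cycle of length $\le k$, then the number of vertices reachable by rainbow paths of length $< k$ from any fixed vertex grows fast enough that $k > \lceil n/2 \rceil$ is forced. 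The subtlety, and where I expect the real work to be, is that rainbow paths do not compose the way directed paths do — a rainbow path from $u$ to $v$ and one from $v$ to $w$ need not give a rainbow walk from $u$ to $w$ because colors can be shared — so the naive ball-growing estimate fails and must be replaced by a more careful potential/charging argument that tracks which colors have been ``spent.''

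\textbf{Charging argument.} To handle the color-sharing obstruction, I would set up a discharging scheme on the color classes themselves. Give each of the $n$ color classes an initial charge of $1$; the target is to show the total charge, $n$, forces a short rainbow cycle. Using minimum degree $\ge 3$ and $|E(G)| = 2n$, one identifies local configurations (a color class both of whose edges are ``light,'' a short rainbow path that can be closed using a color class with an unused edge, a vertex where two color classes interact) and shows that in a counterexample these configurations are forbidden, yielding inequalities among the numbers of vertices at each rainbow-distance from a root. Summing these inequalities over all roots (or over a cleverly chosen root) and combining with $\sum$ of class sizes $= 2n$ should contradict the no-short-rainbow-cycle assumption. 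The hard part will be making the color-accounting tight enough to land exactly at $\lceil n/2 \rceil$ rather than $n/2 + O(1)$: this requires exploiting the size-exactly-$2$ reduction and the minimum-degree-$\ge 3$ reduction in tandem, and checking the extremal cases (small $n$, and graphs where rainbow distances are nearly all equal) by hand.

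\textbf{Remark.} I would expect that an honest write-up instead follows the route of \cite{DDFGGHMM21} via a reduction to a statement about the cycle space / a topological or matroidal argument on the $2$-element color classes (each color class being a ``$2$-set'' is exactly the setting where one can model the problem with a graph on $n$ vertices and $n$ ``double edges,'' i.e. a structure close to a directed graph), thereby importing a genuine Caccetta–Häggkvist-type theorem for $r=2$ (the $r=2$ case of Conjecture~\ref{conj-ch} is known) essentially verbatim. Concretely: form a digraph $D$ by, for each color class $\{uv, xy\}$, deciding an orientation so that the two edges become arcs sharing a tail; one then argues that a minimum counterexample admits such a consistent choice with every out-degree $\ge 1$ on a large subset, and a directed short cycle in $D$ unpacks to a rainbow cycle in $G$. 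The obstacle in this cleaner route is proving the orientation exists with the required out-degree property — that is the step that genuinely uses simplicity of the edge-coloring and the count $|E(G)| = 2n$.
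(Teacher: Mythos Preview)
The paper does not contain a proof of this theorem: it is stated purely as a cited result from \cite{DDFGGHMM21}, with no argument given. So there is no ``paper's own proof'' to compare against.

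As for your proposal itself, it is a plan rather than a proof, and several of the steps have genuine gaps. Two concrete ones:

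\emph{The minimum-degree reduction.} Your claim that a vertex $v$ of degree at most $2$ can be deleted ``together with an incident color class'' so that $n\mapsto n-1$ and the number of colors drops by exactly one is not correct as stated. If $v$ has degree $2$ and its two incident edges lie in two \emph{different} color classes (each of size exactly $2$), deleting $v$ leaves both of those classes with a single edge; you cannot simply discard one color class and keep the hypotheses intact. Some further bookkeeping is needed here, and the actual argument in \cite{DDFGGHMM21} handles this more carefully.

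\emph{The digraph/orientation route in your Remark.} You propose, for each color class $\{uv,xy\}$, to orient the two edges as arcs sharing a tail and then invoke the (known) $r=2$ case of Caccetta--H\"aggkvist. But a color class of size $2$ need not be a $P_3$: it can be a matching of two vertex-disjoint edges, in which case there is no common tail to choose. The distinction between ``star'' (cherry) color classes and ``matching'' color classes is exactly the crux of the \cite{DDFGGHMM21} argument, and your sketch does not address the matching case at all.

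The BFS/ball-growing and discharging paragraphs are, as you yourself flag, heuristic; you correctly identify the obstruction that rainbow paths do not compose, but you do not supply the replacement argument. In short: the outline gestures at the right circle of ideas, but none of the three suggested routes is close to a proof as written.
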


The $r=3$ case is still open, but the following approximate result was obtained by~\citet{CGHI22}.

\begin{theorem}[\cite{CGHI22}]\label{r=3-paper}
Let $G$ be a simple $n$-vertex edge-colored graph with $n$ color classes of size at least $3$. Then $G$ contains a rainbow cycle of length at most $4n/9+7$.
\end{theorem}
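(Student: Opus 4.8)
The plan is to argue by contradiction via a \emph{shortest} rainbow cycle, in the spirit of Shen's proof of Theorem~\ref{thm-shen1}, exploiting that a colour class of size $3$ is hard to ``block''. Suppose $G$ has no rainbow cycle of length at most $4n/9 + 7$. First make the routine reductions: delete edges so that every colour class has size exactly $3$ (so $\lvert E(G)\rvert = 3n$), and observe that two parallel edges of distinct colours already form a rainbow $2$-cycle, so we may take $G$ to be a simple graph; a short argument also lets us assume $G$ is connected. Let $C = v_1v_2\cdots v_g v_1$ be a shortest rainbow cycle; then $g > 4n/9 + 7 \geq 7$, and the goal is to derive a contradiction, ultimately by bounding $n$.

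\textbf{Shortcut lemmas.} Call a colour \emph{free} if it does not occur on $C$, and an edge free if its colour is; there are $n - g$ free colours and $3(n-g)$ free edges. All the structural input comes from one ``reroute a short arc of $C$'' trick: (1) no free edge has both ends on $C$, since such an edge closed with the shorter $C$-arc between its ends is a rainbow cycle of length $\leq 1 + g/2 < g$; (2) no off-cycle vertex $u$ sends two free edges of distinct colours to $C$, since the two $C$-arcs between the attachment points partition the colours of $C$, so one of the two natural cycles through $u$ is rainbow, and as their lengths sum to $g + 4$ one is $< g$; and (3), pushing the same idea further, a free edge with exactly one end on $C$, or a short free path with both ends near $C$, cannot ``span'' a long $C$-arc. (The monochromatic exception to (2) --- all of $u$'s free edges to $C$ sharing one colour --- is handled separately and is lower-order, since that class has only $3$ edges.)

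\textbf{Counting and recombination.} By (1) and (2) the $n - g$ off-cycle vertices span nearly all of the $3(n-g)$ free edges among themselves, so the ``off-cycle free graph'' is dense; after deleting low-degree vertices it has large minimum degree. The endgame is to exhibit a rainbow cycle shorter than $g$ of the form: a short rainbow path $P$ with interior off $C$, joining two vertices $v_i, v_j$ of $C$, closed up by the shorter $C$-arc between them; its length is (length of $P$) $+$ (length of that arc), which we must force below $4n/9 + 7$ and hence below $g$. Here (2)--(3) bound the arc length and the supply of distinct free colours, while density produces the short path. Balancing how short a \emph{rainbow} path can be guaranteed against how small $g$, hence $n - g$, must be is exactly what yields the coefficient $4/9$ and the constant $7$: if $g > 4n/9 + 7$ then $n - g < 5n/9$, and that slack is what makes the recombined cycle strictly shorter than $C$.

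\textbf{The main obstacle.} The hard part is \emph{rainbowness} of the recombined cycle. Since each free colour class has three edges, free edges can clash with one another (they cannot clash with the $C$-arc, as their colours lie off $C$), and consequently a maximum rainbow subgraph of the dense off-cycle free graph keeps only about two-thirds of its edges and need not contain any cycle at all. Getting around this seems to demand either a clever choice of one representative edge per free colour --- a greedy/matroid selection that still retains enough density and enough attachments to $C$ to force a \emph{short} rainbow cycle, not merely a short cycle --- or a Shen-style layered expansion growing rainbow paths from a base vertex while tracking which free colours have been consumed, using that a size-$3$ class is exhausted only after three uses. Making this bookkeeping precise enough to land on $4/9$ rather than a weaker constant is where essentially all of the work lies.
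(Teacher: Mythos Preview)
This theorem is not proved in the present paper at all: it is quoted from \cite{CGHI22} as a known result, with no argument given here. So there is no ``paper's own proof'' to compare your proposal against.

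As for the proposal itself, it is not a proof but a strategy outline, and you say so explicitly in your final paragraph: ``Making this bookkeeping precise enough to land on $4/9$ rather than a weaker constant is where essentially all of the work lies.'' The shortcut lemmas (1) and (2) are correct and standard, but the leap from ``the off-cycle free graph is dense'' to ``it contains a short \emph{rainbow} path between two suitable cycle vertices'' is the entire content of the theorem, and you have not supplied it. In particular, your own observation that a maximum rainbow subgraph of the off-cycle free graph retains only about two-thirds of the edges and ``need not contain any cycle at all'' shows that a naive density argument cannot close the gap; neither the greedy/matroid selection nor the Shen-style layered expansion you allude to is carried out, and it is not clear that either would yield $4/9$ rather than some weaker constant. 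The counting in the middle paragraph is also loose: (2) only forbids an off-cycle vertex from sending free edges of \emph{two distinct} colours to $C$, so it may still send up to three free edges (all of one colour) to $C$, which means ``nearly all of the $3(n-g)$ free edges'' lie off $C$ is not justified without further argument.

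If you want to actually prove this statement, you would need to consult \cite{CGHI22} directly; the methods of the present paper (\cref{thm:main-gen}) give only $n/3 + O(1)$ for $r=3$, which is stronger than $4n/9+7$ anyway, but via a completely different route (the star/non-star dichotomy and the galaxy lemma, not a shortest-cycle analysis).
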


For general $r$, the best bound is due to~\citet{HS22}, who proved that Aharoni's conjecture holds up to a multiplicative constant.

\begin{theorem}[\cite{HS22}] \label{thm:constantfactor}
Let $c = 10^{11}$ and $G$ be a simple $n$-vertex edge-colored graph with $n$ color classes of size at least $cr$. Then $G$ contains a rainbow cycle of length at most $\lceil n/r \rceil$.
\end{theorem}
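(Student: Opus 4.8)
The plan is to reduce to Shen's theorem (\cref{thm-shen1}) via an auxiliary digraph, after some preliminary normalizations. We may assume $n>r$ (otherwise $\lceil n/r\rceil=1$ and the statement is degenerate for simple graphs). Then any pair of parallel edges of $G$ has two distinct colors (color classes contain no parallel edges) and thus forms a rainbow cycle of length $2\leq\lceil n/r\rceil$; so we may assume that $G$ is a simple graph. Consequently $n\cdot cr\leq\sum_c|G_c|=|E(G)|\leq\binom n2<n^2/2$, hence $n>2cr$ and $\lceil n/r\rceil>2c=2\cdot10^{11}$. Thus we always have enormous room: it suffices to produce a rainbow cycle of length at most $n/(2r)+74\leq\lceil n/r\rceil$. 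We also pass to subgraphs of the color classes so that each has size exactly $s:=\lceil cr\rceil$.

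Here is the reduction. It suffices to construct a digraph $D$ on a vertex set $W\subseteq V(G)$ with $|W|\leq n$, together with an injective assignment $w\mapsto c(w)$ of colors to the vertices of $W$, such that (i) $D$ has minimum out-degree at least $2r$, and (ii) every arc of $D$ out of $w$ is an edge of $G$ of color $c(w)$ (in particular its head also lies in $W$). Then $D$ is automatically simple (color class $c(w)$ has at most one edge between $w$ and any fixed vertex), and any directed cycle $w_1\to\cdots\to w_\ell\to w_1$ of $D$ uses color $c(w_i)$ on its arc out of $w_i$; since the $w_i$ are distinct and $c$ is injective, these colors are distinct, so the cycle is a rainbow cycle of $G$. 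By \cref{thm-shen1} applied to $D$ there is such a directed cycle of length at most $\lceil|W|/(2r)\rceil+73\leq\lceil n/(2r)\rceil+73\leq\lceil n/r\rceil$, as required.

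It remains to build $D$, and here I would split on a threshold $\theta$ (of order a small polynomial in $r$). Call a color $c$ \emph{fat} if $\Delta(G_c)\geq\theta$ and \emph{thin} otherwise; at least $n/2$ colors lie in one of the two classes. In the fat case, for each fat color $c$ pick a vertex $x_c$ with $\deg_{G_c}(x_c)\geq\theta$ and let the color-$c$ out-star at $x_c$ tentatively supply the out-arcs of $x_c$; the remaining work is to choose the centers and prune the resulting digraph to one with uniform minimum out-degree at least $2r$ while maintaining property (ii) — the key leverage being that all arcs of a fixed color share the tail $x_c$, so deleting a vertex destroys at most one out-arc per color, which lets a careful peeling argument terminate with a large residual digraph once the slack $s\gg\theta\gg r$ is used to absorb the losses. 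In the thin case, each relevant $G_c$ has at least $s$ edges and maximum degree below $\theta$, hence a matching of size at least $s/(2\theta)$; the union of these matchings over all thin colors is a simple graph (otherwise we would already have a rainbow digon) with at least $\tfrac n2\cdot\tfrac{s}{2\theta}$ edges on $n$ vertices, and so contains a subgraph of minimum degree at least $\tfrac{s}{4\theta}$. In this dense subgraph one builds a rainbow path greedily: at each step the bounded maximum degree of every color class limits how many edges at the current endpoint repeat an already-used color, so a new color and a new vertex remain available until the path is long, and one then closes it into a rainbow cycle of length at most $n/r$ by the room established above.

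The main obstacle — and the source of the astronomical constant — is exactly this passage from abundant local structure to a single short-enough rainbow cycle: in the fat case, pruning the out-star digraph to uniform minimum out-degree while keeping the rainbow property automatic; in the thin case, controlling color repetitions along the greedy path and during the closing step; and calibrating $\theta$ together with all the multiplicative losses so that a single universal $c$ of order $10^{11}$ works simultaneously for every $r\geq1$.
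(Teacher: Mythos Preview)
The paper does not prove this theorem: \cref{thm:constantfactor} is quoted from \cite{HS22} as background, and the paper's own contribution (\cref{thm:main-gen}) is stated as a strengthening of it. There is therefore no proof in the paper to compare your attempt against.

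On the substance of your sketch: the fat/thin dichotomy is a reasonable high-level plan and is in the spirit of the cited work, but what you have written is an outline with genuine gaps rather than a proof. In the fat case, the ``careful peeling argument'' is merely asserted; the real difficulty is that the out-arcs from a chosen center $x_c$ go to arbitrary vertices of $G$, not to other centers, so to satisfy your condition (ii) you must produce a set $W$ of centers that is closed under sufficiently many out-arcs of each color --- this is the heart of the matter and you have not supplied it (nor addressed that distinct fat colors may share a center, forcing you to discard all but one). In the thin case, a long greedy rainbow \emph{path} is not a short rainbow \emph{cycle}; ``closing it'' while preserving the rainbow condition and controlling the length is a nontrivial step that you have not explained. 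Finally, none of the thresholds ($\theta$, the matching size, the peeling losses) are actually calibrated, so the claim that the specific constant $c=10^{11}$ suffices is unsupported by what you have written.
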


In this paper, we strengthen~\cref{r=3-paper} and~\cref{thm:constantfactor}, by proving that Aharoni's conjecture holds for each fixed $r$ up to an additive constant. In fact, we prove a stronger `defect' version  which allows some color classes to have size less than $r$.  To state our main result precisely, for an edge-colored graph $G$, let $C(G)$ be the set of colors appearing in $E(G)$, and for a color $c \in C(G)$, let $c_G$ be the edges of $E(G)$ with color $c$. For each $r \in \mathbb{N}$, we define
\[
\df_r(G)\coloneqq \sum_{c \in C(G), |c_G| \leq r} (r-|c_G|).
\]

The following is our main theorem.  

\begin{theorem}\label{thm:main-gen}
For all $r \geq 2$, there exists a constant $\alpha_r \in O(r^5 \log^2 r)$ such that if $G$ is a simple $n$-vertex edge-colored graph with $n$ color classes of size at least $2$ and at most $r$, then $G$ contains a rainbow cycle of length at most
$$\frac{n+\df_r(G)}{r} + \alpha_r.$$
\end{theorem}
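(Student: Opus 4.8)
The plan is to reduce \cref{thm:main-gen} to the \CH world, so that Shen's theorem (\cref{thm-shen1}) can be applied. The guiding example is the extremal colouring for \cref{conj-aharoni}: on the vertex set $\mathbb{Z}_n$, let the class of colour $i$ be the star $\{\,i(i{+}1),\dots,i(i{+}r)\,\}$; a shortest rainbow cycle is then exactly a shortest directed cycle of the circulant digraph in which $i$ points to $i{+}1,\dots,i{+}r$, which has minimum out-degree $r$. Thus the real task is to deform an arbitrary colouring into something of essentially this shape at an additive cost of $\alpha_r$. First, two harmless reductions. If $n$ is at most a suitable polynomial in $r$ (of the same order as $\alpha_r$), then the asserted bound already exceeds $n$, while picking one edge from each of the $n$ colour classes yields an $n$-edge subgraph of $G$ on $n$ vertices and hence a rainbow cycle of length at most $n$; so we assume $n$ is large relative to $r$. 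If two vertices are joined by edges of two distinct colours then $G$ has a rainbow cycle of length $2$; so we may assume $G$ is an ordinary simple graph whose edge set is partitioned into $n$ colour classes of sizes in $[2,r]$, and then $\lvert E(G)\rvert = rn-\df_r(G)$.

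The core of the argument is to build a simple digraph $D$ on (essentially) $V(G)$, together with an injection $c\mapsto v_c$ from the colour classes into $V(D)$, so that: (a) the arcs leaving $v_c$ are precisely the edges of the class $c_G$, whence $d^+(v_c)=\lvert c_G\rvert$ and $D$ has out-degree deficiency $\df_r(G)+O_r(1)$; and (b) every directed cycle of $D$ of length $\ell$ pulls back to a rainbow cycle of $G$ of length $\ell$. Property (b) is free once each edge of $c_G$ is oriented \emph{away} from $v_c$: then a directed cycle of $D$ visits only vertices of the form $v_c$, does so at most once each, and therefore uses each colour of $G$ at most once. The obstruction to (a) is that orienting an entire colour class out of a single vertex forces the class to be a star centred at that vertex; colour classes that are already stars --- in particular every class of size $2$, and every path on three vertices --- cost nothing. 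Colour classes that are far from stars --- most troublesomely large matchings, but also spiders and triangles --- must be reshaped by local surgery that trades a controlled amount of $\df_r$ and of $n$ for the simplification; one also has to choose the assignment $c\mapsto v_c$, essentially a maximum transversal in the bipartite incidence structure between colours and vertices, which is obstructed only when many colour classes are squeezed onto few vertices, and in that case the dense induced subgraph contains a short rainbow cycle directly. It is in performing these reductions --- in particular in bounding, by a probabilistic relaxation-and-rounding argument, the aggregate cost of the non-star classes and of the transversal --- that the factor $\log^2 r$ appears, and this together with the gadget sizes and the additive constant of \cref{thm-shen1} produces the bound $\alpha_r\in O(r^5\log^2 r)$.

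Given $D$, apply a defect version of Shen's theorem --- asserting that a simple $m$-vertex digraph with out-degree deficiency $\df$ has a directed cycle of length at most $(m+\df)/r+O_r(1)$, provable by adapting Shen's method --- to obtain a directed cycle of $D$ of length at most $(n+\df_r(G))/r+O_r(1)\leq(n+\df_r(G))/r+\alpha_r$. By property (b) this pulls back to a rainbow cycle of $G$ of the same length, which is what \cref{thm:main-gen} claims.

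The step I expect to be the main obstacle is precisely this star-reduction: handling the colour classes that are far from being stars --- above all large matchings --- while keeping an \emph{injective} colour-to-vertex assignment compatible with the surgery, and paying only $O(r^5\log^2 r)$ in total. By comparison, the small-$n$ case, extracting a short rainbow cycle from a degenerate dense piece, and the final appeal to a defect form of \cref{thm-shen1} ought to be routine.
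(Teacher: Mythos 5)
Your high-level instinct — turn colour classes into out-stars of a digraph and invoke a defect version of Shen's theorem — does appear in the paper, but only as one half of a two-case argument, and the half you omit is where the real work lies. You correctly flag the non-star colour classes (matchings, triangles, spiders) as the obstacle, but the remedy you sketch, ``local surgery'' paid for by ``a probabilistic relaxation-and-rounding argument,'' is not viable when the number of such classes is large. If $\Theta(n)$ colour classes are matchings, any surgery that reshapes each one into a star must perturb $\Theta(n)$ edges or vertices in aggregate, so the cost cannot be absorbed into an additive $O_r(1)$ term; there is no transversal or density obstruction to hide behind, since disjoint matchings scattered across the graph create no dense induced piece. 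This is a genuine gap, not a detail to be filled in.

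The paper instead splits on the number of \emph{non-star vertices} (vertices not the centre of any star colour class). When there are many non-star vertices, it does \emph{not} pass to a digraph at all: a counting argument over $k$-subsets of non-star vertices shows either that some $k$-set covers no colour class — giving an $n$-edge rainbow subgraph missing $k$ vertices, hence excess-$k$, to which the Bondy--Simonovits girth bound (\cref{cor:girth}) applies — or that some vertex $v$ meets $\Omega(r^2)$ colours; the latter seeds a maximal set $X$ with strong internal rainbow-connectivity, extended via \cref{key-lemma}, and the proof closes by contracting $X$ and inducting on $n$. Only in the complementary few-non-star-vertices case does the paper build a digraph, and even then not by a one-shot reduction: it first carves $V(G)$ into blobs $T_1,\dots,T_t$ (one per non-star vertex), establishes rainbow-path bounds inside each blob using \cref{key-lemma}, and only then applies \cref{cor:shen} to the leftover star vertices. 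Your defect form of Shen's theorem does exist in the paper (\cref{cor:shen}, derived from \cref{thm:shen}), so that ingredient is fine. Also, the $\log^2 r$ in $\alpha_r$ comes from the Bondy--Simonovits bound and the choice $k=\Theta(r\log r)$, not from any probabilistic rounding — that mechanism does not appear in the paper and you would need to supply and justify it from scratch. In short: roughly half of your plan (the few-non-stars half) is in the right spirit but missing the blob-decomposition machinery, and the other half (many non-star classes) needs a completely different idea, namely the girth-plus-induction argument, which your proposal does not contain.
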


As an immediate corollary of~\cref{thm:main-gen}, we obtain the result claimed in the abstract.  

\begin{corollary} \label{thm:main}
For all $r \geq 1$, there exists a constant $\alpha_r \in O(r^5 \log^2 r)$ such that if $G$ is a simple $n$-vertex edge-colored graph with $n$ color classes of size at least $r$, then $G$ contains a rainbow cycle of length at most
$$\frac{n}{r} + \alpha_r.$$
\end{corollary}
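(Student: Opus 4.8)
The plan is to deduce \cref{thm:main} from \cref{thm:main-gen} by a trimming argument, with the case $r = 1$ handled separately. For $r = 1$, I would invoke the observation already recorded in the introduction: selecting one edge from each of the $n$ color classes produces an $n$-vertex graph with $n$ edges, which contains a cycle, and this cycle is necessarily rainbow of length at most $n = n/1$; so the conclusion holds with $\alpha_1$ any constant (say $\alpha_1 = 0$), which is certainly $O(1)$.

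Now suppose $r \geq 2$, and let $G$ be a simple $n$-vertex edge-colored graph with $n$ color classes of size at least $r$. First I would discard every edge whose color does not lie in one of these $n$ distinguished classes; this keeps $G$ simple, preserves the vertex set, and preserves the $n$ color classes of size at least $r$. Next, for each of the $n$ remaining color classes $c$ with $|c_G| > r$, I would delete edges of color $c$ one at a time until exactly $r$ of them remain. Call the resulting graph $G'$. Since each distinguished class originally had at least $r \geq 2$ edges and is trimmed only down to $r$, no class disappears, so $G'$ is a simple $n$-vertex edge-colored graph with exactly $n$ color classes, each of size exactly $r$. In particular every color class of $G'$ has size at least $2$ and at most $r$, so \cref{thm:main-gen} applies to $G'$ with parameter $r$.

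It remains to note that $\df_r(G') = 0$: the sum defining $\df_r(G')$ ranges over colors $c$ with $|c_{G'}| \leq r$ — that is, over all $n$ colors of $G'$ — and each contributes $r - |c_{G'}| = r - r = 0$. Hence \cref{thm:main-gen} produces a rainbow cycle of $G'$ of length at most
\[
\frac{n + \df_r(G')}{r} + \alpha_r = \frac{n}{r} + \alpha_r .
\]
Since $G'$ is a subgraph of $G$, any rainbow cycle of $G'$ is a rainbow cycle of $G$, which establishes \cref{thm:main} with exactly the same constant $\alpha_r \in O(r^5 \log^2 r)$.

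I expect no genuine obstacle in this deduction: the only things to check are that the trimming operation keeps $G'$ simple, keeps the color-class count at $n$, forces every class size into the window $[2, r]$, and drives the defect term to zero — all of which are immediate. All of the real content is contained in \cref{thm:main-gen} itself.
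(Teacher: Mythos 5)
Your proposal is correct and follows essentially the same route as the paper: handle $r=1$ separately, then for $r \geq 2$ trim color classes down to size exactly $r$ so that $\df_r(G') = 0$ and apply \cref{thm:main-gen}. The extra care you take in spelling out the trimming (discarding extraneous colors, checking simplicity and class count) is sound but matches the paper's one-line reduction in substance.
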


\begin{proof}
 Since we have already noted that~\cref{conj-aharoni}  holds when $r=1$, we may assume $r \geq 2$. Next, let $G$' be obtained from $G$ by deleting edges so that each color class has size exactly $r$.  By applying~\cref{thm:main-gen} to $G'$ and noting that $\df_r(G')=0$, it follows that $G'$ (and hence also $G$) contains a rainbow cycle of length at most $\frac{n}{r} + \alpha_r$.
\end{proof}

Our proof shows that we may take $\alpha_r =3000000 r^5 \log^2(r)$.  An interesting open problem is to improve the dependence on $r$.  Given that the \CH{}conjecture is still open, likely the best one could hope for is $\alpha_r \leq \ell$ for some universal constant $\ell$.  This would be a generalization of~\cref{thm-shen1}, up to an additive constant.  

We would also like to point out that there have been many recent results on Aharoni's conjecture. These are too numerous to list here, but  we refer the interested reader to~\cite{CGHI22} for a survey of the state of the art.

\subsection*{Paper Outline} 
In~\cref{sec:tools} we state some preliminary lemmas needed in the proof of our main theorem.  
Our proof is split into two cases, depending on whether there are many `non-star' vertices or few non-star vertices.  We set-up our notation and assumptions in~\cref{sec:setup}, where we also prove a key lemma used in the proof. Then, we handle the many non-star case in \cref{sec:many}, and the few non-star case in~\cref{sec:few}.

\section{Preliminaries} \label{sec:tools}
We begin with some results from the literature needed in the proof of our main theorem.
We say that a graph is \defn{excess-$k$} if it contains at least $k$ more edges than vertices.  \citet{BS02} proved the following upper bound on the girth of excess-$k$ graphs.

 \begin{theorem} \label{thm:BS}
For all $n \geq 4$ and $k \geq 2$, every $n$-vertex, excess-$k$ graph has girth at most
\[
\frac{2(n+k)}{3k} (\log k + \log \log k +4).
\]
 \end{theorem}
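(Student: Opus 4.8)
The plan is to shrink $G$ to a bounded-size weighted ``skeleton'' and run a Moore-type count there. First I would reduce to the case that $G$ equals its own $2$-core and has excess exactly $k$, by repeatedly passing to the $2$-core (which never decreases the excess) and, whenever the excess exceeds $k$, deleting an edge. The result $G_0$ has $N\leq n$ vertices, exactly $N+k$ edges and minimum degree at least $2$; since any short cycle of $G_0$ is one of $G$, it suffices to show $G_0$ has girth at most $\tfrac{2(N+k)}{3k}(\log k+\log\log k+4)$. (If $G_0$ is disconnected I would instead work in the component maximising the ratio of excess to number of vertices; as a component that is a single cycle has excess $0$, this component has excess at least $1$, and one then either invokes the connected case or, when its excess is $1$, uses that a subdivided theta on $M$ vertices has a cycle of length at most $\tfrac23(M+1)$.)

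Next I would suppress every vertex of degree $2$, recording on each new edge, as its weight, the length of the ``thread'' (maximal degree-$2$ path) of $G_0$ it replaces. This yields a weighted multigraph $H$ of minimum degree at least $3$, so $v\coloneqq|V(H)|$ satisfies $v\leq\sum_{u\in V(H)}(\deg_{G_0}(u)-2)=2k$; moreover $H$ has exactly $v+k$ edges, its edge-weights are positive integers summing to $W\coloneqq N+k$, and the girth of $G_0$ equals the least weight of a cycle of $H$ (a loop counting as a $1$-cycle and a parallel pair as a $2$-cycle). So the crux is: \emph{a multigraph with at most $2k$ vertices, minimum degree at least $3$, and positive integer edge-weights of total weight $W$ has a cycle of weight at most $\tfrac{2W}{3k}(\log k+\log\log k+4)$.} Passing to $H$ is exactly what turns the ``$\log n$'' of a crude argument into a ``$\log k$'': the vertex count is now $O(k)$, not $O(n)$.

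For the crux I would combine a Moore-type bound with weight thresholding. Fixing a threshold $t$ and discarding the at most $W/t$ edges of weight exceeding $t$ leaves a subgraph whose excess is still close to $k$ provided $t\gtrsim W/k$, in which I would look for a cycle with few edges by a ball-growing argument: from a start vertex (chosen incident only to light edges), grow the ball $B(u,\rho)=\{x:\dist_H(u,x)<\rho\}$ in order of weighted distance; by minimum degree $3$ the ball has many boundary edges, and the light ones feed new vertices within an extra $t$ of weighted distance, so — absent a short light cycle — one can hope the ball roughly doubles every $t$ units of radius, which against $|B(u,\rho)|\leq v\leq 2k$ caps $\rho$ at about $\tfrac{W}{3k}\log k$ (the $3$ being $(\#\text{edges})/(\text{excess})$, hence the constant $\tfrac23$). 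Optimising $t$ against $\rho$ — in the cleanest accounting each doubling of $t$ costs a constant additive number of levels while halving the discarded budget — should yield the lower-order $\log\log k$ term. I expect the real work to be in making this ball-growing rigorous: it is delicate near the start and in the presence of a few very heavy edges, which can stall the search, so one must show such edges can be bypassed using the spare degree, or folded into the discarded budget, without destroying the excess that drives the exponential branching; handling loops and parallel edges is the remaining bookkeeping.
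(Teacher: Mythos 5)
The paper does not prove \cref{thm:BS}; it is quoted as a result of Bollob\'as and Szemer\'edi~\cite{BS02} and used only as a black box (through \cref{cor:girth}), so there is no proof in the paper against which to compare yours. Evaluated on its own merits, your reduction step is sound: passing to the $2$-core, suppressing degree-$2$ vertices to obtain a weighted multigraph $H$ with minimum degree $3$, and the count $|V(H)|\le 2k$ (from $\sum_{u\in V(H)}(\deg u-2)=2k$ with every summand at least $1$) are all correct, and this is indeed the opening move of the Bollob\'as--Szemer\'edi argument. The genuine gap is in your ``crux'' lemma on $H$: the ball-growing/thresholding step is only a heuristic. Once the heavy edges (weight $>t$) are discarded, the light subgraph need not have minimum degree $3$ --- a vertex may lose two or even all of its incidences --- so the assertion that ``the ball roughly doubles every $t$ units'' fails exactly where the branching is needed, and neither the $(W/3k)\log k$ cap on $\rho$ nor the $\log\log k$ lower-order term from ``optimising $t$ against $\rho$'' is actually derived. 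You acknowledge this delicacy yourself. As written the proposal is a plausible blueprint for the theorem, not a proof of it; the content of~\cite{BS02} is precisely the careful counting argument that replaces this heuristic.
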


In~\cref{thm:BS}, and for this paper, all logs are in base 2. We will use the following immediate corollary of~\cref{thm:BS} (see~\cite{HS22}).

\begin{corollary} \label{cor:girth}
For all $n \geq 4$ and $k \geq 2$, every $n$-vertex, excess-$k$ graph has girth at most
\[
\frac{14(n+k)\log k}{3k}.
\]
\end{corollary}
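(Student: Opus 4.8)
The plan is to derive \cref{cor:girth} directly from \cref{thm:BS} by absorbing the factor $\log k + \log\log k + 4$ into a constant multiple of $\log k$. Since $k \geq 2$ forces $\log k \geq 1$ and hence $\log\log k \geq 0$, all quantities involved are well-defined and nonnegative, so it suffices to prove the numerical inequality
\[
2\left(\log k + \log\log k + 4\right) \leq 14 \log k \qquad \text{for every integer } k \geq 2.
\]
Multiplying this through by $\tfrac{n+k}{3k} > 0$ and invoking \cref{thm:BS} then yields the stated girth bound $\tfrac{14(n+k)\log k}{3k}$.

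After dividing by $2$ and cancelling one copy of $\log k$, the displayed inequality is equivalent to $\log\log k + 4 \leq 6\log k$, and the only (minor) care needed is to check this over the whole range $k \geq 2$. I would split into two cases. For $2 \leq k \leq 16$, we have $\log\log k \leq \log\log 16 = \log 4 = 2$, so the left side is at most $6$, while the right side is at least $6\log 2 = 6$. For $k \geq 16$, I would use the elementary bound $\log\log k \leq \log k$ (valid for $k \geq 2$, since $\log k \leq k$) together with $\log k \geq 4$, which gives
\[
\log\log k + 4 \leq \log k + 4 \leq \log k + \log k = 2\log k \leq 6\log k.
\]
Combining the two cases establishes $\log\log k + 4 \leq 6\log k$ for all $k \geq 2$, and the corollary follows.

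I do not expect any genuine obstacle: the mathematical content is entirely in \cref{thm:BS}, and \cref{cor:girth} simply restates its conclusion in a cleaner, slightly weaker form that is more convenient to apply in the later sections. If one prefers to avoid the case split, one can instead note that $g(k) \coloneqq 6\log k - \log\log k - 4$ is increasing for $k \geq 2$ --- its derivative equals $\tfrac{1}{k\ln 2}\bigl(6 - \tfrac{1}{\ln 2 \cdot \log k}\bigr)$, which is positive once $\log k > \tfrac{1}{6\ln 2}$, in particular for all $k \geq 2$ --- and that $g(2) = 6 - 0 - 4 = 2 > 0$; either way the inequality holds throughout the claimed range.
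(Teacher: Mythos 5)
Your proof is correct. The paper does not actually give a proof of this corollary; it declares it an ``immediate corollary'' of \cref{thm:BS} and points the reader to~\cite{HS22}. Your reduction is the obvious one: cancel the common positive factor $\tfrac{n+k}{3k}$ and reduce to the scalar inequality $2(\log k + \log\log k + 4) \le 14\log k$, equivalently $\log\log k + 4 \le 6\log k$ for all $k \ge 2$. Both of your verifications of that inequality are sound. In the case split, $\log\log 16 = \log 4 = 2$ gives the bound $\log\log k + 4 \le 6$ on $2 \le k \le 16$ while $6\log k \ge 6$ there, and for $k \ge 16$ you correctly use $\log\log k \le \log k$ (from $\log k \le k$) together with $4 \le \log k$. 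In the monotonicity argument, the derivative $g'(k) = \tfrac{1}{k\ln 2}\bigl(6 - \tfrac{1}{\ln 2 \cdot \log k}\bigr)$ is positive once $\log k > \tfrac{1}{6\ln 2} \approx 0.24$, which certainly holds for $k \ge 2$, and $g(2) = 6 - 0 - 4 = 2 > 0$. This is precisely the routine computation the paper is implicitly relying on, so there is no discrepancy to report.
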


  Now, let $D$ be a digraph.  A \defn{sink} is a vertex with out-degree equal to zero.  For each $r \in \mathbb{N}$, we define
\[
\df_r(D)\coloneqq \sum_{u \in U} (r-\deg^+(u)),
\]
where $U$ is the set of vertices of $D$ of out-degree at most $r$.  We require the following theorem of~\citet{shen2000}.

\begin{theorem}\label{thm:shen}
Let $D$ be a simple $n$-vertex digraph with no sink, and let $g$ be the length of a shortest directed cycle of $D$. If $g \ge 2r-1$, then $n \ge r(g-1) + 1 - \df_r(D)$.
\end{theorem}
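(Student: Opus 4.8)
The plan is to pass to a strongly connected piece of $D$, extract a shortest directed cycle $C$, and then account for every vertex of $D$ by charging it to a vertex of $C$ in such a way that each cycle vertex receives roughly $r$ vertices, the term $-\df_r(D)$ absorbing the cycle vertices whose small out-degree forces their group to be short. For the reduction, let $X$ be a terminal strong component of $D$, so no arc of $D$ leaves $X$; as $D$ has no sink, $|X|\ge 2$, so $D[X]$ contains a directed cycle, and every vertex of $X$ keeps all of its out-neighbours inside $X$. Thus $D[X]$ is a simple digraph with no sink, $\df_r(D[X]) \le \df_r(D)$, and its girth $g_X$ satisfies $g_X \ge g \ge 2r-1$; hence it suffices to prove the bound for $D[X]$, since then $n \ge |V(D[X])| \ge r(g_X-1)+1-\df_r(D[X]) \ge r(g-1)+1-\df_r(D)$. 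So assume $D$ is strongly connected and fix a shortest directed cycle $C = v_0 v_1 \cdots v_{g-1} v_0$, indices modulo $g$.

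The key structural input is a quantified ``no near-chord'' fact: for $1 \le t \le r-1$, any directed walk of length $t$ from $v_i$ to a cycle vertex $v_j$ whose intermediate vertices avoid $V(C)$ satisfies $(j-i)\bmod g \le t$. Indeed, appending to it the arc-path of $C$ from $v_j$ back to $v_i$ yields a closed walk of length $t+((i-j)\bmod g)$, and since any closed walk of positive length less than $g$ contains a cycle of length less than $g$, minimality of $C$ forces $t+((i-j)\bmod g)\ge g$, which rearranges to $(j-i)\bmod g\le t$; using $g\ge 2r-1$, the forward windows $\{v_{i+1},\dots,v_{i+t}\}$ with $t\le r-1$ then consist of $t$ distinct vertices and have width less than $g/2$. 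In particular (the case $t=1$), the only out-neighbour of $v_i$ on $C$ is $v_{i+1}$, so $v_i$ has exactly $\deg^+(v_i)-1$ out-neighbours off $C$, a number at least $(r-1)-\max(r-\deg^+(v_i),0)$, i.e.\ at least $r-1$ minus the contribution of $v_i$ to $\df_r(D)$.

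Finally, using strong connectivity, every off-cycle vertex $u$ reaches $C$; route it along a shortest path and let $\tau(u)$ be the index where this path first meets $C$ (ties broken by smallest index). The aim is to produce, for each of $v_1,\dots,v_{g-1}$, a set of about $r-1$ off-cycle vertices reachable from it by short directed paths, with these $g-1$ sets pairwise disjoint after resolving conflicts via $\tau$; summing ``$\ge (r-1)$ minus the $\df_r$-contribution of $v_t$'' over $t=1,\dots,g-1$ and adding the $g$ vertices of $C$ then gives $n\ge g+(g-1)(r-1)-\df_r(D)=r(g-1)+1-\df_r(D)$. I expect the disjointness/overlap control to be the main obstacle. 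A single off-cycle vertex $w$ can be an out-neighbour of many cycle vertices at once, so merely collecting off-cycle out-neighbours overcounts; the near-chord fact limits this (if $w$ is at distance $\ell$ from $C$, the cycle vertices having $w$ as an out-neighbour lie in a cyclic interval of length $O(\ell)$), but converting this local constraint into the exact global total $r(g-1)+1$ — and charging deep off-cycle vertices so that they pay for the overlaps they create — is where the real work lies, and is presumably where $g\ge 2r-1$ (rather than a weaker girth bound) is essential. One could instead try to build a ``level function'' $h\colon V(D)\to\mathbb{Z}_g$ with $h(v_i)=i$ along which every arc moves forward and bound $n$ by the number of nonempty levels, but making such an $h$ exist runs into the same overlap problem.
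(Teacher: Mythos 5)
The paper does not prove this statement itself: it is imported verbatim from Shen~\cite{shen2000}, so there is no in-paper proof to compare against, and your attempt must stand on its own.

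Your reduction to a terminal strong component is correct, the ``no near-chord'' fact is correct and is indeed the right structural lemma, and the deduction that each $v_i$ has $v_{i+1}$ as its only out-neighbour on $C$ (hence at least $r-1-\max(0,r-\deg^+(v_i))$ off-cycle out-neighbours) is fine. But the proof is not finished, and you say so yourself. The entire content of the theorem is the final count $n \ge g + (g-1)(r-1) - \df_r(D)$, which you propose to get by assigning to each of $v_1,\dots,v_{g-1}$ a set of roughly $r-1$ off-cycle vertices, pairwise disjoint, ``after resolving conflicts via $\tau$.'' That step is stated as an aim, not proved. The near-chord fact does not deliver disjointness: a single off-cycle vertex $w$ can simultaneously lie in $N^+(v_i)$ for several $i$ in a cyclic interval (the fact only bounds the length of that interval via the distance from $w$ back to $C$), and the defect term $\df_r(D)$ is defined to charge cycle vertices of small out-degree, not to absorb such overlaps. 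To close the argument you would have to show that the reassignment via $\tau$ leaves each $v_t$ with its full quota of at least $r-1-\max(0,r-\deg^+(v_t))$ off-cycle vertices, and that any vertex an adversarial overlap ``steals'' is recovered elsewhere; you also never identify where $g\ge 2r-1$ is actually used, which is a warning sign, since without using that hypothesis the bound is false. As it stands, this is a correct reduction plus a correct preparatory lemma plus an honest description of the missing counting argument --- it is a plan, not a proof.
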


We will use the following immediate corollary of \cref{thm:shen}.
\begin{corollary}\label{cor:shen}
Let $D$ be a simple $n$-vertex digraph with no sink, and let $g$ be the length of a shortest directed cycle of $D$. Then
$$g \leq \frac{n+\df_r(D)}{r} + 2r-1.$$
\end{corollary}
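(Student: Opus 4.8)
The plan is to derive the bound directly from Theorem \ref{thm:shen} by splitting into two cases according to how $g$ compares with $2r-1$. In the easy case, $g \leq 2r-2 < 2r^2$, and since the right-hand side of the claimed inequality is at least $2r^2$ (the term $\frac{n+r+\df_r(D)}{r}$ being nonnegative), the bound holds trivially. So I would assume $g \geq 2r-1$, which is exactly the hypothesis needed to apply Theorem \ref{thm:shen}.

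In this main case, Theorem \ref{thm:shen} gives $n \geq r(g-1) + 1 - \df_r(D)$. Rearranging for $g$ yields
\[
g \leq \frac{n - 1 + \df_r(D)}{r} + 1 = \frac{n + (r-1) + \df_r(D)}{r} \leq \frac{n + r + \df_r(D)}{r}.
\]
This already beats the claimed bound, since we have not even used the extra slack term $2r^2$. So in fact the corollary holds with room to spare; the $2r^2$ term is only there to absorb the trivial case where $g < 2r-1$ and Theorem \ref{thm:shen} does not apply.

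There is essentially no obstacle here: the only thing to be careful about is confirming that all quantities involved are nonnegative (so that dropping or adding the $2r^2$ and the $+r$ versus $+(r-1)$ terms goes in the right direction), and that the case split covers all possibilities. Since $D$ has no sink, $g$ is well-defined and finite, and $n, r, \df_r(D) \geq 0$, so both steps are valid. The statement is deliberately weakened (with the crude additive $2r^2$) so that it can be quoted uniformly later without worrying about whether $g \geq 2r-1$ holds.
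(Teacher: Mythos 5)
Your proof is correct and essentially the same as the paper's: both handle the case where \cref{thm:shen} does not directly apply by noting that the $2r^2$ slack absorbs it, and otherwise rearrange the inequality from \cref{thm:shen}. The only cosmetic difference is that you split on $g \geq 2r-1$ directly, whereas the paper splits on $n \geq 2r^2$ and then argues (by contradiction) that $g \geq 2r-1$ follows; your version is arguably the cleaner route to the same place.
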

\begin{proof}
The statement is obviously true if $g \leq 2r-2$.  Thus, we may assume that $g \geq 2r-1$.    
It follows by \cref{thm:shen} that
$$
g \leq \frac{n+\df_r(D)-1}{r} + 1 \leq \frac{n+\df_r(D)}{r}+2r-1,
$$
as desired. 
\end{proof}

\section{The Set-up} \label{sec:setup}
For the remainder of the paper, $G$ is a simple edge-colored graph with $n$ vertices, $n$ colors, and each color class of size at least $2$ and at most $r$, for $r \ge 2$. In other words, $G$ satisfies the hypotheses of~\cref{thm:main-gen}. Let $k := \lceil 78 r \log{r} \rceil$ and $f(r) = 80k^2 r^2$.  We will show that, for fixed $r$, we may take $\alpha_r := (4r+2)f(r)+2r$ in~\cref{thm:main-gen}.  

To prove~\cref{thm:main-gen}, we proceed by induction on $n$. The base case is $n \le \alpha_r$, which is true since there is always a rainbow cycle of length at most $n$. Thus, for the remainder of the paper, we also assume that $n>\alpha_r$.


For a subset of vertices $X \subseteq V(G)$, we let $G[X]$ be the induced subgraph of $G$ on $X$, and $C(X)$ be the set of colors which appear in $G[X]$. By our earlier definition of $C(G)$ for an edge-colored graph $G$, an alternative way of writing this definition is $C(X) = C(G[X])$.

Let $U \subseteq V(G)$.  A \defn{galaxy rooted at $U$} is a collection $M:=\{M_u : u \in U\}$ of subgraphs of $G$ such that
\begin{itemize}
\item
$M_u$ is a star centred at $u$, for all $u \in U$,
\item
$|E(M_u)| \geq 1$ and all edges of $M_u$ are the same color $c_u$,
\item
$c_u \neq c_v$, for all $u \neq v$.
\end{itemize}
We say that $x \in V(G)$ is an \defn{$M$-neighbor} of $u \in U$ if $x$ is a neighbor of $u$ in $M_u$. We let $C(M):=\{c_u : u \in U\}$ be the set of \defn{$M$-colors}.  Finally, for each $u \in U$ we let $\abs{u}_M:=|E(M_u)|$.  We will use the following key lemma multiple times in the proof.

\begin{lemma}\label{key-lemma}
Let $R = X \cup \{u_1, \cdots, u_m\} \subseteq V(G)$, $M:=\{M_{u_i} : i \in [m]\}$ be a galaxy rooted at $\{u_1, \cdots, u_m\}$, $C_X \subseteq C(X)$, and  $d \in \R$ be such that 
\begin{enumerate}
\item
$V(M_{u_i}) \setminus \{u_i\} \subseteq X \cup \{u_1, \dots, u_{i-1}\}$ for all $i \in [m]$,
\item
$C_X \cap C(M)=\emptyset$,
\item
for every pair of vertices $u,v \in X$, there exists a rainbow path consisting of colors in $C_X$  from $u$ to $v$ in $G[X]$ of length at most $d$.
\end{enumerate}
Then for every pair of vertices $u,v \in R$, there exists a rainbow path consisting of colors in $C_X \cup C(M)$ from $u$ to $v$ in $G[R]$  of length at most
\begin{align*}
\frac{m+\sum_{i=1}^m (r-\abs{u_i}_M)}{r} + d + 2.
\end{align*}
\end{lemma}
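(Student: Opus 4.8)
The plan is to reduce the statement to an application of \cref{cor:shen} by building an auxiliary digraph on a suitable vertex set and controlling the length of a shortest directed cycle there. First I would handle the two trivial cases: if $u = v$ there is nothing to prove (the empty path), and if $u, v \in X$ then hypothesis~(3) already gives a rainbow path of length at most $x$, which is well within the claimed bound. So assume at least one of $u, v$ lies in $\{u_1, \dots, u_m\}$.

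The key idea is that from each star center $u_i$ we get, essentially for free, a short rainbow path to any vertex of $X$: using hypothesis~(1), some $M$-neighbor $x_i$ of $u_i$ lies in $X \cup \{u_1, \dots, u_{i-1}\}$, so we can route from $u_i$ via the edge $u_i x_i$ (color $c_{u_i}$) and then, if $x_i \notin X$, recurse. To keep this controlled, I would introduce an auxiliary digraph $D$ whose vertex set is $\{u_1, \dots, u_m\}$ together with one dummy vertex $z$ representing ``all of $X$''. For each $i$, I add an arc from $u_i$ to each $M$-neighbor of $u_i$ that lies in $\{u_1, \dots, u_{i-1}\}$, and an arc from $u_i$ to $z$ if $u_i$ has an $M$-neighbor in $X$; I also add enough out-arcs at $z$ (or treat $z$ as a sink handled separately). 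The out-degree of $u_i$ in $D$ is exactly $\abs{u_i}_M$ minus a possible contribution collapsed into the single arc to $z$; in any case $\deg^+(u_i) \geq \abs{u_i}_M$ after an appropriate accounting, and the $\df$ term of $D$ matches $\sum_i (r - \abs{u_i}_M)$ up to the $+r$ slack in \cref{cor:shen}. Crucially, because all edges of $M_{u_i}$ share the color $c_{u_i}$ and the colors $c_{u_i}$ are pairwise distinct and disjoint from $C_X$ (hypothesis~(2)), \emph{any} directed walk in $D$ that does not revisit a vertex uses each color $c_{u_i}$ at most once, hence lifts to a rainbow path in $G$ using colors in $C(M)$, possibly finished off by a rainbow $C_X$-path of length $\leq x$ inside $X$ once it reaches $z$.

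The heart of the argument is then: a shortest ``$u$-to-$X$'' route in $D$ has length at most (roughly) $\frac{m + \sum_i (r - \abs{u_i}_M)}{r} + r$-ish, which I would extract from \cref{cor:shen} applied to $D$ (after adding a sink-removing gadget so the no-sink hypothesis holds, e.g.\ by adding an arc from $z$ back in, or by observing a short cycle through $z$ directly gives the bound). More precisely, either $D$ has a short directed cycle avoiding $z$ — but such a cycle would have to use some $c_{u_i}$ twice, contradicting... no: rather, a short cycle would still lift to give what we want, or we argue $D$ restricted away from $z$ can be assumed acyclic — and then the directed distance from any $u_i$ to $z$ is bounded by \cref{cor:shen}'s cycle-length bound via a standard ``add a back-arc to create a cycle'' trick. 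To build the final $u$--$v$ path: take the short rainbow path from $u$ to some $x' \in X$ (colors in $C(M)$), then the length-$\leq x$ rainbow $C_X$-path from $x'$ to some $x'' \in X$, then the short rainbow path from $x''$ to $v$ (colors in $C(M)$, and disjoint in color from the first piece since distinct $u_i$'s are used — this needs a small argument that the two $C(M)$-pieces can be chosen vertex-disjoint in $\{u_1,\dots,u_m\}$, or we simply take one combined shortest path). Summing: at most $\frac{m + \sum_i (r - \abs{u_i}_M)}{r} + 2$ from the two $M$-pieces plus $x$ from the middle, matching the claimed bound.

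The main obstacle I anticipate is the bookkeeping that makes the ``$+2$'' (rather than a larger constant) come out exactly, and ensuring the two halves of the path (from $u$ and from $v$) don't reuse a color or a vertex. The clean way is probably to avoid splitting into two halves at all: instead show directly that in the auxiliary digraph there is a short directed path from $u$ to $v$ when both are among the $u_i$ (using that $z$ is a ``universal'' intermediate reachable and re-exitable via the $X$-path of length $x$), so that a single application of a Menger/BFS-layer or \cref{cor:shen}-style counting argument yields the bound with the ``$+x+2$'' slack — the ``$+2$'' accounting for the first edge out of $u$ into $X\cup\{u_1,\dots,u_{i-1}\}$ and the last edge into $v$, and the division by $r$ coming directly from \cref{cor:shen} with the defect term $\sum_i(r-\abs{u_i}_M)$. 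Getting the no-sink hypothesis of \cref{cor:shen} to hold (the vertex $z$, and possibly $u_1$, may be sinks) will require a small gadget, and verifying that the gadget does not inflate the bound is the one genuinely fiddly check.
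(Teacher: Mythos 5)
Your proposal takes a genuinely different route — constructing an auxiliary digraph and invoking the Shen bound (\cref{cor:shen}) — from the paper's proof, which uses a direct combinatorial counting argument. Unfortunately, as sketched, the Shen route has gaps that cannot be filled without essentially rediscovering the paper's central idea, and it cannot recover the stated bound in any case.

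The most serious issue is the additive slack. \cref{cor:shen} carries a $+2r^2$ error term, and even \cref{thm:shen} applied directly only yields $g \leq \frac{n-1+\df_r(D)}{r}+1$ \emph{under the hypothesis} $g \geq 2r-1$, so the fallback case $g < 2r-1$ still costs $O(r)$. The lemma demands slack exactly $+2$, independent of $r$; this quantity then gets cancelled against the $-5r$ in \cref{lemma-manystars} and against the $+2$ in the definition of $d_j$ in \cref{lemma-fewstars}. No invocation of Shen-type cycle bounds can produce an $r$-free additive constant, so you would at best prove a weaker lemma with $+O(r^2)$ slack, not the lemma as stated.

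The second issue is the factor-of-two problem, which you flag ("avoid splitting into two halves") but do not resolve. Routing $u \to X$ and $v \to X$ by two separate shortest-cycle arguments and joining via a $C_X$-path pays $\frac{m+\sum_i(r-|u_i|_M)}{r}$ twice. The paper's fix is the key missing idea: it defines a deterministic routing map $p$ that sends each $u_i$ to its \emph{minimum-index} $M$-neighbor (or to $X$ directly if possible). Because indices strictly decrease along both walks $P^u$ and $P^v$ (by hypothesis (1)), and because the intersection pair $(u_a,u_b)$ with a common $M$-neighbor is chosen with $a+b$ maximum, the $M$-neighborhoods of all vertices encountered on the two truncated walks, together with the common neighbor $u_\ell$, form pairwise-disjoint subsets of $\{u_1,\dots,u_m\}$. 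This gives the single inequality $m \geq 1 + \sum_{w \in V(Q_a)}|w|_M + \sum_{w \in V(Q_b)}|w|_M$, which bounds \emph{both} halves simultaneously and with no $r$-dependent slack. Without this, the two halves overcount and the bound doubles. The same observation also handles the rainbow-disjointness of the two $M$-halves automatically (disjoint vertex sets give disjoint $M$-colors), which your sketch defers to "a small argument" that is in fact the crux. A third, smaller point: while your observation that arcs go only to strictly smaller indices is correct (so any cycle must use the $X$-gadget $z$, making a back-arc gadget structurally sound), combining the gadget with Shen's hypothesis adds yet more additive slack and does not rescue the $+2$.
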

\begin{proof} See~\cref{helpfulfigure} for an example with $m=7$.
For each $u \in R$, we define a function $p: R \to R$ as follows.  If $u \in X$, then $p(u)=u$. If $u \notin X$, but some $M$-neighbor $v$ of $u$ is in $X$, we arbitrarily choose one such $v$, and define $p(u)=v$. Otherwise, define $p(u)=u_i$, where $i$ is the minimum index such that $u_i$ is an $M$-neighbor of $u$.  For example, in~\Cref{helpfulfigure}, we have $p(u_4)=u_2$.  

Let $u,v \in R$.  Let $P^u$ be the path beginning at $u$ and repeatedly applying the map $p$ until we reach a vertex in $X$.  Define $P^v$ analogously, but starting from $v$. 

First suppose there exists $u_a \in V(P^u)$ and $u_b \in V(P^v)$ such that $u_a$ and $u_b$ have a common $M$-neighbor in $\{u_1, \dots, u_m\}$.  Choose $u_a$ and $u_b$ such that $a+b$ is maximum. By symmetry, we may assume $a > b$.      
Let $P_a$ be the subpath of $P^u$ from $u$ to $u_a$, and let $P_b$ be the subpath of $P^v$ from $v$ to $u_b$. Let $Q_a = P_a \setminus \{u_a\}$ and $Q_b = P_b \setminus \{u_b\}$. By the maximality of $a+b$, 
\begin{align*}
m &\geq 1 + \sum_{w \in V(Q_a)} |w|_M + \sum_{w \in V(Q_b)} |w|_M \\
&= 1+\sum_{w \in V(Q_a)} (r-(r-|w|_M)) + \sum_{w \in V(Q_b)} (r-(r-|w|_M)).
\end{align*}
Rearranging, we have 
\begin{align*}
|E(P_b)|+|E(P_a)| &\leq \frac{m -1 + \sum_{w \in V(Q_a)} (r-|w|_M) + \sum_{w \in V(Q_b)} (r-|w|_M)}{r} \\
&\leq \frac{m+\sum_{i=1}^m (r-|u_i|_M)}{r}.
\end{align*}
Let $u_\ell$ be a common $M$-neighbor of $u_a$ and $u_b$ in $\{u_1, \dots, u_m\}$.  Then $P_a \cup P_b \cup \{u_au_\ell, u_bu_\ell \}$ is  a rainbow path consisting of colors in $C_X \cup C(M)$ from $u$ to $v$ in $G[R]$  of length at most
$$\frac{m+\sum_{i=1}^m (r-|w_i|_M)}{r} + d+2,$$
as required.  

For the remaining case, let $u' \in X$ and $v' \in X$ be the other ends of $P^u$ and $P^v$.  Let $Q^u=P^u \setminus u'$ and $Q^v=P^v \setminus v'$, and let $S^u$ be $Q^u$ minus its last vertex and $S^v$ be $Q^v$ minus its last vertex. If both $u$ and $v$ are in $X$, then we are done by (3).  Thus, by symmetry, we may assume that $u = u_a$ for some $a \in [m]$ and that either $v \in X$ or $v =u_b$ for some $b<a$. By the previous case, we may assume that for all $x \in V(Q^u)$ and $y \in V(Q^v)$, $x$ and $y$ do not have any common $M$-neighbors in $\{u_1, \dots, u_m\}$.  Therefore, 
\begin{align*}
m &\geq 1 + \sum_{w \in V(S_a)} |w|_M + \sum_{w \in V(S_b)} |w|_M \\
&= 1+\sum_{w \in V(S_a)} (r-(r-|w|_M)) + \sum_{w \in V(S_b)} (r-(r-|w|_M)).
\end{align*}
Rearranging, we have 
\begin{align*}
|E(P_u)|+|E(P_v)| &\leq |E(Q_u)|+|E(Q_v)| +2\\
&\leq \frac{m -1 + \sum_{w \in V(S_u)} (r-|w|_M) + \sum_{w \in V(S_v)} (r-|w|_M)}{r}+2 \\
&\leq \frac{m+\sum_{i=1}^m (r-|u_i|_M)}{r}+2.
\end{align*}
By (3), there is a rainbow path $P$ consisting of colors in $C_X$ from $u'$ to $v'$ in $G[X]$ of length at most $d$. Since $C_X \cap C(M)=\emptyset$, $P^u \cup P^v \cup P$ is a rainbow path consisting of colors in $C_X \cup C(M)$ from $u$ to $v$ in $G[R]$  of length at most
$$\frac{m+\sum_{i=1}^m (r-|u_i|_M)}{r}+d+2,$$
as required. This completes the proof of the lemma.
\end{proof}

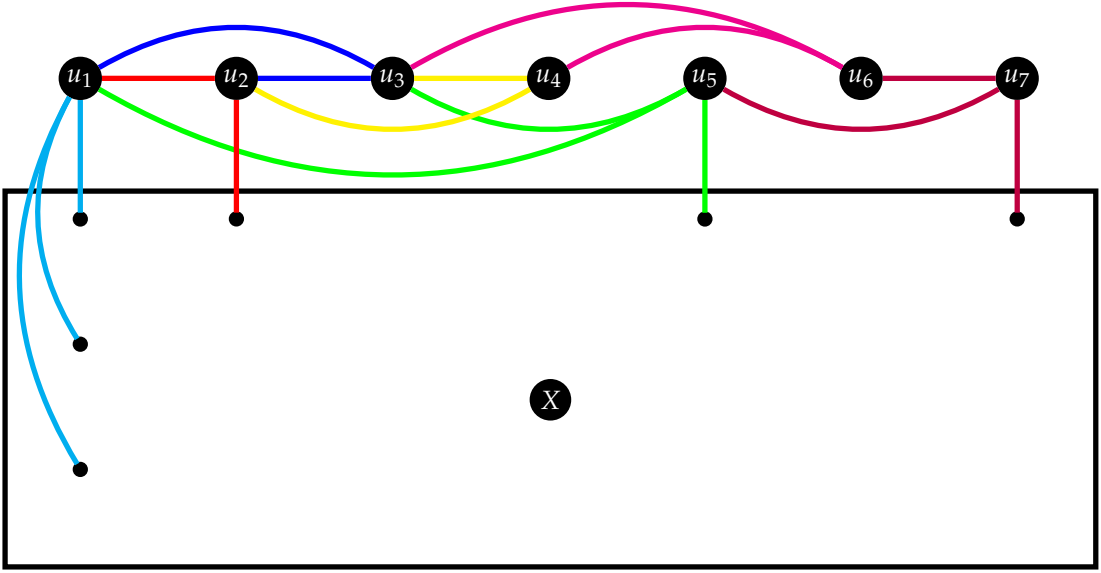
\begin{figure} 
    \centering
 \begin{tikzpicture}[node distance=1.5cm]
 \draw (-1,-6.5) rectangle node[below] {$X$} +(14.5,5);

  \node[circle,scale=1]          (A)              {$u_1$};
  \node[circle,scale=1]            (B) [right=of A] {$u_2$};
  \node[circle,scale=1]            (C) [right=of B] {$u_3$};
  \node[circle,scale=1]             (D) [right=of C] {$u_4$};
  \node[circle,scale=1]           (E) [right=of D] {$u_5$};
  \node[circle,scale=1]             (F) [right=of E] {$u_6$};
 \node[circle,scale=1]             (G) [right=of F] {$u_7$};
 \node[fill, circle, scale=0.3]       (H) [below=of G] {};
  \node[fill, circle, scale=0.3]       (I) [below=of E] {};
  \node[fill, circle, scale=0.3]       (J) [below=of B] {};
  \node[fill, circle, scale=0.3]       (K) [below=of A] {};
\node[fill, circle, scale=0.3]       (L) [below=of K] {};
\node[fill, circle, scale=0.3]       (M) [below=of L] {};

  \path (A) edge [red, above]     (B);
  \path (B) edge [blue, above]     (C);
  \path (C) edge [yellow, above]     (D);
   \path (F) edge [purple, above]     (G);
   \path (E) edge [purple, bend right]  (G);
    \path (G) edge [purple]  (H);
    \path (C) edge [magenta, bend left]  (F);
    \path (D) edge [magenta, bend left]  (F);
    \path (E) edge [green]  (I);
    \path (A) edge [green, bend right]  (E);
    \path (C) edge [green, bend right]  (E);
    \path (B) edge [yellow, bend right]  (D);
    \path (A) edge [blue, bend left]  (C);
    \path (B) edge [red]  (J);
    \path (A) edge [cyan]  (K);
    \path (A) edge [cyan, bend right]  (L);
    \path (A) edge [cyan, bend right]  (M);
    \end{tikzpicture}
    \caption{An example of an edge-colored graph satisfying the hypotheses of Lemma \ref{key-lemma}. Unlabelled vertices are the neighbours of $\{u_1, \dots, u_7\}$ in $X$.  Other vertices of $X$ and edges with both ends in $X$ are not depicted.}
   \label{helpfulfigure}
    \end{figure}

\section{Many Non-Stars}
\label{sec:many}
A color class of $G$ is a \defn{star class} if its edges form a star. A vertex of $G$  is  a \defn{star vertex} if it is the centre of a star class, and is otherwise a \defn{non-star vertex}. 
Let $N$ be the set of non-star vertices of $G$.  

Recall that $k = \lceil 78 r \log{r} \rceil$, $f(r) = 80k^2 r^2$, and $\alpha_r = (4r+2)f(r)+2r$.
In this section we consider the case where $|N| > f(r)$.  We say that a subset $N' \subseteq N$ \defn{covers} a color class $A$ if every edge in $A$ is incident to at least one vertex in $N'$.   

\begin{claim} \label{claim:dominate}
Each color class is covered by at most $4\binom{|N|}{k-2}$ sets $N' \subseteq N$ of size $k$.
\end{claim}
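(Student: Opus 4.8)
The plan is to bound separately, according to the structure of a color class $A$, the number of $k$-sets $N' \subseteq N$ that cover $A$. Since $G$ is simple, $A$ has no parallel edges, so $A$ is a simple graph on at most $n$ vertices with at least $2$ edges. The key structural observation is that a simple graph with at least two edges either contains two disjoint edges (a matching of size $2$) or is a star (possibly plus isolated vertices), and in the latter case its center is a star vertex, not a non-star vertex. I will split into these two cases.

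First suppose $A$ contains two vertex-disjoint edges $e_1 = a_1b_1$ and $e_2 = a_2b_2$. If $N'$ covers $A$, then $N'$ hits $e_1$ and hits $e_2$, so $N'$ contains at least one of $a_1,b_1$ and at least one of $a_2,b_2$; there are at most $4$ choices for this pair of "witness" vertices, and the remaining $k-2$ (or fewer) vertices of $N'$ are chosen arbitrarily from $N$. Hence the number of covering $k$-sets is at most $4\binom{|N|}{k-2}$, since once two of the $k$ elements are pinned down the rest form a $(k-2)$-subset of $N$ (and we only overcount). Second, suppose $A$ has no two disjoint edges; then $A$ is a star, so its center $v$ is a star vertex and therefore $v \notin N$. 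Every edge of $A$ is incident to $v$, and the only way a set $N' \subseteq N$ can cover $A$ — i.e. hit every edge of $A$ — while avoiding $v$ is to contain, for each edge $vw$ of $A$, the leaf endpoint $w$. But $A$ has at least $2$ edges, hence at least $2$ distinct leaves $w_1, w_2$, and these must both lie in $N'$; arguing exactly as before, $N'$ is determined by these two pinned vertices together with a $(k-2)$-subset of $N$, so again there are at most $4\binom{|N|}{k-2}$ such sets (in fact at most $\binom{|N|}{k-2}$, but the weaker bound suffices and handles all cases uniformly).

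I would then remark that in both cases we have found two fixed vertices of $N'$ (either two endpoints witnessing two disjoint edges, or the two leaves of a star witnessing non-coverage of the center), and the bound $4\binom{|N|}{k-2}$ absorbs the at most $4$ choices of witness pair and the free choice of the remaining $k-2$ vertices. The only mild care needed is the degenerate possibility $k < 2$, which does not occur since $k = \lceil 78 r \log r\rceil \geq 2$ for $r \geq 2$, and the possibility that $A$ spans fewer than $k$ vertices, in which case no $k$-subset of those vertices exists and the count is trivially $0 \leq 4\binom{|N|}{k-2}$. I do not anticipate a real obstacle here; the one thing to get right is the case analysis ensuring that whenever $A$ is coverable by a set inside $N$ we can always locate two forced elements of that set, which is exactly where simplicity of $G$ (no parallel edges, so $|A| \geq 2$ genuinely gives two distinct edges) and the definition of star vertex are used.
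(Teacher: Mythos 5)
Your case analysis has a gap: the ``key structural observation'' that a simple graph with at least two edges either contains two disjoint edges or is a star is false. A triangle $K_3$ has two edges, has no two disjoint edges, and is not a star. In your second case you deduce that ``its center $v$ is a star vertex and therefore $v \notin N$,'' but a triangle has no center, so this reasoning does not apply to it and the case is simply not handled. (It is also not true that a color class that happens to be a triangle contributes a star vertex: the vertices of a triangle class are star vertices only if they happen to be centers of \emph{other} star classes.)

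The paper's proof uses a three-way split that matches yours except for this missing case: (i) $A$ is a star class, handled as in your second case by noting that all leaves must lie in $N'$; (ii) $A$ is a non-star class containing a matching of size $2$, handled as in your first case; and (iii) $A$ is a non-star class with no matching of size $2$, which forces $A$ to be a triangle. For the triangle the paper observes that any set covering $A$ must contain at least $2$ of its $3$ vertices (a vertex cover of $K_3$ has size at least $2$), giving the bound $3\binom{|N|-2}{k-2} \leq 4\binom{|N|}{k-2}$. Adding this third case would repair your argument; the rest of what you wrote is correct and essentially identical to the paper's proof.
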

\begin{proof}
First suppose $A$ is a star class.  Then $A$ is covered by at most $$\binom{|N|-|A|}{k-|A|} \leq \binom{|N|-2}{k-2} \leq 4\binom{|N|}{k-2}$$
sets $N' \subseteq N$ of size $k$ (since $|A| \geq 2$).

So, suppose instead that $A$ is a non-star class. If there exists a matching $M = \{u_1v_1, u_2 v_2\}$ of size $2$ in $A$, then we note that every set of vertices which covers $A$ must contain either $u_1$ or $v_1$, and either $u_2$ or $v_2$. It follows that $A$ is covered by at most
$$4\binom{|N|}{k-2}$$ subsets of size $k$ in $N$, as required. 

Finally, suppose that there does not exist a matching of size 2 in $A$. Since $A$ is not a star, it follows that $A$ is a triangle. Then the number of subsets of $N$ of size $k$ that cover $A$ is at most

\begin{align*}
    3 \binom{|N|-2}{k-2} \le 4 \binom{|N|}{k-2}.
\end{align*}

This completes the proof.
\end{proof}

\begin{lemma}\label{lemma-manystars}
If $|N| > f(r)$, then $G$ contains a rainbow cycle of length at most
$$\frac{n+\sum_{c \in C(G)}(r-|c_G|)}{r}+\alpha_r.$$
\end{lemma}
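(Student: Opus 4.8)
The plan is to use \cref{claim:dominate} to find a single set $N' \subseteq N$ of size $k$ that covers only few color classes, and then to recurse on the graph obtained by deleting $N'$. First I would count incidences: since each color class is covered by at most $4\binom{|N|}{k-2}$ sets of size $k$, summing over all $n$ color classes shows that the total number of (color class, covering $k$-set) incidences is at most $4n\binom{|N|}{k-2}$. Dividing by the number of $k$-subsets of $N$, which is $\binom{|N|}{k}$, the average number of color classes covered by a random $k$-set is at most $4n\binom{|N|}{k-2}/\binom{|N|}{k}$. Using $|N| > f(r) = 80k^2r^2$ and the standard estimate $\binom{|N|}{k-2}/\binom{|N|}{k} = \frac{k(k-1)}{(|N|-k+1)(|N|-k+2)} \le \frac{k^2}{(|N|-k)^2}$, this average is at most roughly $4nk^2/(|N|-k)^2$; since $|N|$ is large relative to $k$, one gets that there is a specific set $N'$ of size $k$ covering at most $n/(4r)$ color classes (or some similar fraction — the exact constant has to be chased through $f(r)$, but the generous slack in $f(r) = 80k^2r^2$ is clearly designed to make this work comfortably).

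Next I would delete $N'$ from $G$ to obtain $G' := G - N'$, which has $n' = n-k$ vertices. The color classes of $G$ not covered by $N'$ survive intact in $G'$; there are at least $n - n/(4r) = n(1 - 1/(4r))$ of them, each still of size between $2$ and $r$. Now I want to apply the induction hypothesis of \cref{thm:main-gen} to $G'$ with its surviving color set. The subtlety is that $G'$ has $n' = n-k$ vertices but possibly more than $n'$ surviving color classes, so I would restrict to exactly $n'$ of the surviving classes (discarding the rest — we have plenty, since $n(1-1/(4r)) \ge n - k$ when $n$ is large enough, which holds in the inductive regime $n > \alpha_r$). Call the resulting edge-colored graph $G''$: it has $n'$ vertices, $n'$ color classes each of size in $[2,r]$, and $\df_r(G'') \le \df_r(G)$ since we only deleted edges and vertices and kept classes of size $\ge 2$. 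By induction, $G''$ contains a rainbow cycle of length at most
\[
\frac{n' + \df_r(G'')}{r} + \alpha_r \le \frac{(n-k) + \df_r(G)}{r} + \alpha_r < \frac{n + \df_r(G)}{r} + \alpha_r,
\]
and this rainbow cycle is also a rainbow cycle of $G$ of the required length.

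The main obstacle I expect is bookkeeping rather than a genuine difficulty: one must verify that the averaging argument really produces a $k$-set covering few enough classes (the precise threshold needed so that, after deleting $k$ vertices, enough classes remain to invoke induction with exactly $n-k$ colors), and one must confirm that the base case / inductive range ($n > \alpha_r$, with $\alpha_r = (4r+2)f(r) + 2r^2$) is large enough for all the inequalities — in particular that $n - n/(4r) \ge n - k$, i.e. $k \ge n/(4r)$, fails for large $n$, so actually the correct statement is that the number of \emph{discarded} surviving classes, $n - n/(4r) - (n-k) = k - n/(4r)$, is negative for large $n$, meaning we have more than enough surviving classes and simply pick $n-k$ of them. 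I would double-check the direction of this inequality carefully, since it is the one place the argument could silently break; everything else is routine substitution into the induction hypothesis and \cref{claim:dominate}.
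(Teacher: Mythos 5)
Your approach diverges substantially from the paper's and has several genuine gaps.

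First, the paper does not delete $N'$ and recurse. It splits into two subcases depending on how many color classes have an edge with an end in $N$. When that number is small (fewer than $(|N|-k)^2/4k^2$), the averaging over $k$-subsets of $N$ via \cref{claim:dominate} produces an $N'$ covering \emph{no} color class, and the paper then picks one surviving edge per color class to build a rainbow subgraph $H$ with $n$ edges on at most $n-k$ vertices. Since $H$ is excess-$k$, \cref{cor:girth} directly gives a rainbow cycle of length at most $n/r$ — no recursion at all. When the number of such classes is large, averaging instead produces a vertex $v \in N$ with more than $5r^2$ incident colors, and the bulk of the lemma (the maximal set $X$, the sequence $u_1,\dots,u_m$, \cref{key-lemma}, contracting $T'$) handles that case. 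Your proposal has no analogue of this second case at all.

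Second, the recursion as you set it up does not go through even in the favorable regime. You write that ``the color classes of $G$ not covered by $N'$ survive intact in $G'$,'' but this is false: \cref{claim:dominate}'s notion of ``covered'' means \emph{every} edge of the class is incident to $N'$. A class not covered by $N'$ merely has at least one edge avoiding $N'$; other edges of that class may still be deleted when you remove $N'$, so the class can drop to size $1$ (violating the hypotheses of \cref{thm:main-gen}) or shrink in a way that makes $\df_r(G'')$ \emph{larger} than $\df_r(G)$, not smaller as you claim. This breaks the inductive invocation.

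Third, and most seriously, your own sanity check at the end detects the fatal problem and then draws the wrong conclusion from it. You observe that you would have $n - n/(4r)$ ``surviving'' classes and need $n-k$ of them, so the slack is $k - n/(4r)$. You then note this is negative for large $n$ and conclude ``we have more than enough surviving classes'' — but a negative slack means precisely the opposite: you have \emph{too few}. More carefully: with $|N|$ barely exceeding $f(r) = 80k^2r^2$, the averaging gives an $N'$ covering $O\bigl(n/(k^2 r^4)\bigr)$ classes, and for this to be at most $k$ you would need $n = O(k^3 r^4)$, which is not true in general since $n$ can be arbitrarily large while $|N|$ stays just above $f(r)$. This is exactly why the paper's Case A does not recurse but instead invokes the girth bound on the excess-$k$ subgraph: that argument is insensitive to $n$ being large.

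So the averaging step is the right opening move (it is the paper's opening move too), but the way you use its output — deleting $N'$ and recursing — fails both because uncovered classes are not intact and because the count of surviving classes is insufficient, and you are missing the entire second case (many classes meeting $N$) where the rainbow-star / \cref{key-lemma} machinery is needed.
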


\begin{proof}
First suppose that fewer than $\frac{(|N|-k)^2}{4k^2}$ color classes contain an edge with an end in $N$. By~\cref{claim:dominate}, at most
\begin{align*}
\frac{(|N|-k)^2}{4k^2} \cdot 4\binom{|N|}{k-2} < \binom{|N|}{k}
\end{align*}
subsets of $N$ of size $k$ cover some color class. Therefore, there exists a subset $N' \subseteq N$ of size $k$ such that $N'$ does not cover any color class.  By choosing an edge of each color with neither end in $N'$ we obtain a rainbow subgraph $H$ of $G$ with $|E(H)|=n$ and $|V(H)| \leq |V(G) \setminus N'|=n-k$.  In particular, $H$ is excess-$k$. Since $r \ge 2$ and $n \ge k$ (otherwise we have a rainbow cycle of length at most $k < \alpha_r$), ~\cref{cor:girth} gives that $H$ has a rainbow cycle of length at most 
\begin{align*}
\frac{14(n+k)\log k}{3k} &\leq \frac{28n\log{k}}{3k} \\
 &\leq \frac{28n(\log{78}+\log{r}+\log{\log{r}})}{234r \log{r}} \\
&\leq \frac{28n(\log{78}+2)\log{r}}{234r\log{r}} \\
&\leq \frac{n}{r},
\end{align*}
as desired.

So, we may assume that at least $\frac{(|N|-k)^2}{4k^2}$ color classes have at least one edge with an end in $N$. Note that $|N|-k \ge |N|/2$ since $|N| > f(r) \ge 2k$. Therefore, by averaging, there exists a vertex $v \in N$ with at least $\frac{|N|}{16k^2} > \frac{f(r)}{16k^2}=5r^2$ colors having at least one edge incident with $v$. Therefore, there exists a rainbow star $T$ centred at $v$ with $|V(T)| > 5r^2. $

Now, let $X$ be a maximal set of vertices containing $V(T)$ such that 
\begin{enumerate}
\item
exactly $|X|-1$ colors of $G$ appear in $G[X]$,
\item
for all $u,v \in X$, there exists a rainbow path from $u$ to $v$ in $G[X]$ with length at most
$$\frac{|X|+\sum_{c \in C(X)} (r-|c_G|)}{r} -5r +2,$$
\end{enumerate}
where $|c_G|$ is the size of the color class in $G$, and $C(X)$ is the set of colors which appear in $G[X]$.

Note that $V(T)$ satisfies the first condition since otherwise $G[V(T)]$ contains a rainbow triangle and we have obtained a short rainbow cycle. Moreover, $V(T)$ satisfies the second condition since $|V(T)| > 5r^2$ and  every color class has size at most $r$. Since $V(T)$ satisfies both conditions and clearly $V(T) \subseteq V(T)$, it follows that $V(T)$ is a valid candidate and therefore $X$ is a well-defined set.

Now, let $u_1, \cdots, u_m$ be a maximal sequence of vertices in $G \setminus X$ such that there is a star color class $A_i$ centred at $u_i$ with $V(A_i) \setminus \{u_i\} \subseteq X \cup \{u_1, \dots, u_{i-1}\}$. Note that $u_1, \dots, u_m$ exist, since we allow $m=0$. Let $T' = X \cup \{u_1, \cdots, u_m\}$, $c_i$ denote the color of $A_i$, and $C'(T') = C(X) \cup \{c_1, \cdots, c_m\}$. Let $d:=\frac{|X|+\sum_{c \in C(X)} (r-|c_G|)}{r} -5r +2$. By \cref{key-lemma},  for all $u,v \in T'$ there exists a rainbow path in $G[T']$ from $u$ to $v$ consisting of colors in $C(X) \cup \{c_1, \dots, c_m\}$ of length at most

\begin{align*}
\frac{m+\sum_{i=1}^m (r-\abs{u_i}_M)}{r} + d + 2 
&= \frac{m+\sum_{i=1}^m (r-\abs{u_i}_M)}{r} + \left(\frac{|X|+\sum_{c \in C(X)} (r-|c_G|)}{r} -5r +2\right) + 2 \\
&= \frac{|T'|+\sum_{c \in C'(T')} (r-|c_G|)}{r} -5r +4.
\end{align*}

It follows that exactly $|X|-1+m=|T'|-1$ colors appear in $G[T']$; otherwise, $G[T']$ contains a rainbow cycle of length at most 
$$\frac{|T'|+\sum_{c \in C'(T')} (r-|c_G|)}{r} -5r +5 \leq \frac{n + \df_r(G)}{r}-5r+5 \leq \frac{n + \df_r(G)}{r}.$$

Therefore, $C'(T') = C(T')$. Now, let $\mathcal{A}$ be the set of color classes $A$ such that no edge of $A$ is in $G[T']$ and some edge in $A$ has exactly one end in $T'$.  
For each $A \in \mathcal{A}$, choose an edge $e_A \in A$ with exactly one end in $T'$, and let $Y \subseteq V(G) \setminus T'$ be the set of ends of these edges not in $T'$. Note that $|Y|=|\mathcal{A}|$; otherwise $G[T' \cup Y]$ contains a rainbow cycle of length at most
$$\frac{|T'|+\sum_{c \in C(T')} (r-|c_G|)}{r} -5r +6 \leq \frac{n+\df_r(G)}{r}-5r+6 \leq \frac{n+\df_r(G)}{r}.$$

Moreover, exactly $|T' \cup Y|-1$ colors appear in $G[T' \cup Y]$; otherwise $G[T' \cup Y]$ contains a rainbow cycle of length at most 
$$\frac{|T'|+\sum_{c \in C(T')} (r-|c_G|)}{r} -5r +7 \leq \frac{n+\df_r(G)}{r}-5r+7 \leq \frac{n+\df_r(G)}{r}.$$

Note that when adding $Y$ to $T'$ the `rainbow diameter' increases by at most $2$, regardless of the size of $Y$.  Therefore, for all $u,v \in T' \cup Y$, there exists a rainbow path from $u$ to $v$ in $G[T' \cup Y]$ of length at most 

$$\left(\frac{|T'|+\sum_{c \in C(T')} (r-|c_G|)}{r} -5r +4\right)+2 \leq \frac{|T' \cup Y|+\sum_{c \in C(T' \cup Y)} (r-|c_G|)}{r} -5r +6-\frac{|Y|}{r}.$$

It follows that $|Y| \leq 4r-1$; otherwise $T' \cup Y$ contradicts the maximality of $X$. 
Now, we contract $T'$ to a single vertex, delete all edges with colors in $T'$, and remove parallel edges of the same color. Let the resulting edge-colored graph be $G'$. Note that by the construction of $T'$, $G'$ contains $|V(G')|$ color classes of size at least $2$, and at most $4r-1$ color classes have less edges in $G'$ than they do in $G$ (since $|\mathcal{A}| \leq 4r-1$). By induction on $n$, $G'$ has a rainbow cycle $D$ of length at most
$$\frac{|V(G')|+\sum_{c \in C(G')} (r-|c_{G'}|)}{r}+\alpha_r \leq \frac{|V(G')|+(4r-1)(r-2)+\sum_{c \in C(G)}(r-|c_G|)}{r}+\alpha_r.$$

 Observe that $D$ is either a rainbow cycle in $G$, or a rainbow path between $u$ and $v$, for some $u,v \in T'$. If the former holds, then 
\[
|V(D)| \leq \frac{n+\sum_{c \in C(G)}(r-|c_G|)}{r}+\alpha_r,
\]
since $|V(G')|+(4r-1)(r-2) \leq |V(G')|+5r^2-1 \leq |V(G')|+|T'|-1 = n$.  

In the latter case, from our earlier application of \cref{key-lemma}, we have that there exists a rainbow path $P$ in $G[T']$ from $u$ to $v$ of length at most

$$\frac{|T'|+\sum_{c \in C(T')} (r-|c_G|)}{r} -5r +4.$$

By the construction of $G'$, the colors used in $P$ are disjoint from the colors used in $D$.  Therefore, $D \cup P$ is a rainbow cycle in $G$ of length at most 
\begin{align*}
|E(D)|+|E(P)| &\leq \frac{|V(G')|+|T'|+(4r-1)(r-2)+\sum_{c \in C(G)}(r-|c_G|)}{r}-5r+4+\alpha_r \\
&=\frac{n+1+(4r-1)(r-2)+\sum_{c \in C(G)}(r-|c_G|)}{r}-5r+4+\alpha_r \\
&\leq \frac{n+\sum_{c \in C(G)}(r-|c_G|)}{r}+\alpha_r, 
\end{align*}
since $1+(4r-1)(r-2) \leq r(5r-4)$ simplifies to $r^2+5r \geq 3$ which is true for $r \ge 2$. This completes the proof of the lemma.
\end{proof}

\section{Few Non-Stars} \label{sec:few}
Recall from the previous section that $N$ is the set of non-star vertices of $G$.  
In this section, we complete the proof by handling the case $|N| \le f(r)$.

\begin{lemma}\label{lemma-fewstars}
If $|N| \le f(r)$, then $G$ contains a rainbow cycle of length at most
$$\frac{n+\sum_{c \in C(G)}(r-|c_G|)}{r}+\alpha_r.$$
\end{lemma}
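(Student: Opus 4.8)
The plan is to reduce the few-non-stars case to the digraph setting, where we can invoke the Caccetta--Häggkvist-type bound of Shen (\cref{cor:shen}). Since $|N| \le f(r)$, the overwhelming majority of vertices are star vertices, each the centre of a star color class. The natural move is to orient each star class from its centre outward: for a star vertex $u$ with star class $c_u$, add arcs $(u,v)$ for every $M$-neighbor $v$ of $u$. This produces a digraph $D_0$ on $V(G)$ in which every star vertex has out-degree equal to the size of its star class, hence at least $2$, and the only possible sinks are the non-star vertices in $N$. Because the star colors are pairwise distinct and each is used at a single centre, a short directed cycle in $D_0$ corresponds to a rainbow cycle in $G$ of the same length.

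\medskip

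The first key step is to deal with the sinks coming from $N$. Since $|N| \le f(r)$ is a constant (for fixed $r$), I would argue that we may delete $N$ together with all edges incident to it: this removes at most $f(r)$ vertices and affects at most $|N| \cdot \Delta$ color classes, but more importantly it changes $\df_r$ by a bounded amount and reduces $n$ by at most $f(r)$. After deletion, some previously-star vertices may have lost $M$-neighbors, but as long as a star class had an $M$-neighbor outside $N$, the centre still has positive out-degree; the problematic vertices are those whose entire star class landed in $N$, and there are at most $f(r)$ colors, hence at most $f(r)$ such centres, which we iteratively delete as well. Iterating, we peel off at most (a constant depending on $r$ times) $f(r)$ vertices in total before reaching an induced subdigraph $D$ with no sink. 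I would bound the total number of deleted vertices by something like $O(r \cdot f(r))$, which is absorbed into $\alpha_r$, and track the effect on $\df_r(G)$ — the deletions only remove edges of non-star color classes and bounded-size star classes, contributing at most $O(r \cdot f(r) \cdot r)$ to the defect, again absorbed into $\alpha_r$.

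\medskip

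The second step is the counting: let $D$ be the resulting sink-free simple digraph on $n' \ge n - O(rf(r))$ vertices. For each star vertex remaining, $\deg^+$ in $D$ equals the current size of its star class; vertices that lost out-neighbors through deletion have smaller out-degree, and $\df_r(D) \le \df_r(G) + O(r f(r))$ where the extra term accounts both for the lost out-neighbors and for the star classes of size $2 \le |c_G| < r$ (which already contributed to $\df_r(G)$). Apply \cref{cor:shen}: the girth $g$ of $D$ satisfies
$$g \le \frac{n' + r + \df_r(D)}{r} + 2r^2 \le \frac{n + \df_r(G)}{r} + 2r^2 + O(f(r)).$$
Since $\alpha_r = (4r+2)f(r) + 2r^2$ and $f(r) = 80k^2r^2$ with $k = \lceil 78r\log r\rceil$, the additive slack $2r^2 + O(f(r))$ is comfortably less than $\alpha_r$ provided the implied constant in the $O(f(r))$ term is at most $4r+2$; I would verify this explicitly. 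Then a shortest directed cycle in $D$, pulled back to $G$, is a rainbow cycle (distinct star colors) of length $g \le \frac{n + \df_r(G)}{r} + \alpha_r$.

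\medskip

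The main obstacle I anticipate is bookkeeping the sink-removal process cleanly: making sure that after deleting $N$ and cascading, the number of deleted vertices and the accrued defect are genuinely $O(f(r))$ and not something like $O(n)$ — the danger is a long chain of centres each pointing only into previously-deleted vertices. The resolution is that each deletion after the initial removal of $N$ is forced by a color class being "used up" (all its edges gone), and there are only $n$ colors but crucially only the ones touching $N$ can be affected, and $|N| \le f(r)$ bounds the number of edges incident to $N$ by $f(r) \cdot \Delta$ — here I may need a separate argument that vertices in $N$ have bounded degree, or alternatively count colors: a color class is killed only if it had an edge incident to a deleted vertex, and I would set up the cascade so that the set of "dead" colors grows by a controlled amount, ultimately bounded in terms of $|N|$ and $r$. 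A secondary subtlety is simplicity of $D$: parallel arcs $(u,v)$ could arise if two star classes both point from $u$ to $v$, but distinct star vertices have distinct colors and a star class has a unique centre, so at most one arc is created from each ordered pair by the star orientation — $D$ is automatically simple.
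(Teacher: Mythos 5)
Your reduction to Shen's bound is the right instinct, but the cascade argument has a genuine gap that cannot be repaired by bookkeeping. After deleting $N$, a star vertex $u$ is killed only when its entire $M$-star lands in the deleted set; you hope the number of such cascading deletions is $O(rf(r))$, but it can be $\Omega(n)$. Since distinct star vertices carry distinct colors, nothing prevents unboundedly many star vertices from pointing only into $N$: with $N=\{a,b\}$ one can have $k$ star vertices $u_1,\dots,u_k$ with $M_{u_i}=\{u_ia,u_ib\}$, then $\Theta(k^2)$ further star vertices $w_{ij}$ with $M_{w_{ij}}=\{w_{ij}u_i,w_{ij}u_j\}$, and so on, so the deleted set grows multiplicatively at each stage. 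For the same reason the extra defect you must control — the number of pairs $(u,v)$ with $u$ surviving, $v$ deleted, and $v$ an $M$-neighbor of $u$ — is also unbounded: a single deleted vertex can be an $M$-neighbor of arbitrarily many surviving star vertices, one arc per color. So even if the number of deletions were capped, $\df_r(D)$ would not be $\df_r(G)+O(rf(r))$, and \cref{cor:shen} would give nothing useful.

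The paper avoids deletion. For each $v_j\in N$ it builds a cluster $T_j$ absorbing $v_j$ together with a set of star vertices, taking a \emph{maximal} short-rainbow-path set $X\ni v_j$ and then extending it (by a cascade much like yours, controlled via \cref{key-lemma}) with vertices whose $M$-neighbors all fall into the structure so far. The maximality of $X$ supplies exactly the bound your approach lacks: if $4r$ or more surviving vertices had an $M$-neighbor in $T_j$, adding them would contradict maximality, so each $T_j$ creates at most $4r-1$ new surviving vertices with an absorbed $M$-neighbor (property (4) in the paper). Summed over $j\le t\le f(r)$, this caps the defect of the residual digraph on $W=V(G)\setminus\bigcup_i T_i$ by $(4r+1)(r-1)f(r)$, which is what makes \cref{cor:shen} applicable there. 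The case $W=\emptyset$ also requires a separate argument — contract each $T_j$, find a short cycle in the contracted graph, and stitch it together with in-cluster rainbow $M$-paths — which your reduction does not anticipate.
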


\begin{proof}
Let $N = \{v_1, v_2, \cdots, v_t\}$ be an arbitrary fixed ordering of the vertices of $N$.  Let $U=V(G) \setminus N$ and choose a galaxy $M:=(M_u : u \in U)$ rooted at $U$ by arbitrarily choosing a star class $M_u$ of color $c_u$ centred at $u$ for each $u \in U$. An edge $e \in E(G)$ is an \defn{$M$-edge} if $e \in E(M_u)$ for some $u \in U$.   For $Y \subseteq V(G)$ we define $\gamma(Y)$ to be the set of star vertices $u \in V(G) \setminus Y$ such that at least one $M$-neighbor of $u$ is in $Y$.  

For each $X \subseteq V(G)$, let $C_M(X)$ be the set of $M$-colors which have an edge with both ends in $X$. Now, we iteratively construct pairwise-disjoint sets of vertices $T_1, T_2, \cdots, T_t$ of $G$ and associated constants $d_1,d_2, \cdots, d_t$ such that for all $j \in [t]$, we have the following four properties:\\

\begin{enumerate} 
\item \label{item1}
$T_j \cap N = \{v_j\}$.\\
\item \label{item2}
For each star vertex $v \notin \bigcup_{i=1}^j T_j $, $v$ has at least one $M$-neighbor which is not in $\bigcup_{i=1}^j T_j$.\\
\item \label{item3}
For each pair of vertices $u,v \in T_j$, there exists a rainbow path of $M$-edges between $u$ and $v$ in $G[T_j]$ of length at most $$\frac{|T_j|+\sum_{c \in C_M(T_j)} (r-|c_G|)}{r}+d_j.$$\\
\item \label{item4}
$\sum_{i=1}^j d_i  + |\gamma(\bigcup_{i=1}^j T_i)| \leq (4r+1)j$.\\
\end{enumerate}

Fix $j \in [t]$, and suppose we have already constructed $T_1, \cdots, T_{j-1}$ with the associated constants $d_1,\cdots,d_{j-1}$ such that they satisfy the above four properties. Let $G_j:=G \setminus \bigcup_{i=1}^{j-1} T_i$.  For $X \subseteq V(G_j)$, let $s(X)$ be the set of star vertices $x \in X$ such that at least one $M$-neighbor of $x$ is in $\bigcup_{i=1}^{j-1} T_i$.  
Let $X$ be a maximal set of vertices of $G_j$ such that $X \cap N=\{v_j\}$, and for each pair of vertices $u,v \in X$, there exists a rainbow path of $M$-edges between $u$ and $v$ in $G[X]$ of length at most $$d:=\frac{|X|+\sum_{c \in C_M(X)}(r-|c_G|)}{r}+|s(X)|.$$

Note that $X$ exists since $\{v_j\}$ is a candidate for $X$. Let $u_1, \dots, u_m$ be a maximal sequence of vertices in $G_j \setminus (X \cup N)$ such that for all $i \in [m]$,  all the $M$-neighbors of $u_i$ are in $\bigcup_{i=1}^{j-1} T_i \cup X \cup \{u_1, \dots, u_{i-1}\}$. Define $T_j:=X \cup \{u_1, \dots, u_m\}$, and $d_j:=|s(T_j)|+2$.

Clearly, $T_j$ satisfies~\cref{item1} and~\cref{item2} (by the maximality of $u_1, \dots, u_m$). Let $C':=\{c_{u_i} : i \in [m]\}$. By \cref{key-lemma},  for any pair of vertices $u,v \in T_j$, there is a rainbow path of $M$-edges from $u$ to $v$ in $G[T_j]$ of length at most
\begin{align*}
\frac{m+\sum_{c \in C'} (r-|c_G|)}{r} + d + 2 
&=
\frac{m+\sum_{c \in C'} (r-|c_G|)}{r} + \left(\frac{|X|+\sum_{c \in C_M(X)}(r-|c_G|)}{r}+|s(X)|\right)+2 \\
&\le \frac{|T_j|+\sum_{c \in C_M(T_j)} (r-|c_G|)}{r}+|s(T_j)|+2 \\
&= \frac{|T_j|+\sum_{c \in C_M(T_j)} (r-|c_G|)}{r}+d_j,
\end{align*}
since $|s(X)| \le |s(T_j)|$. It follows that \cref{item3} is satisfied. Finally, we show that~\cref{item4} holds.

\begin{claim}
$\sum_{i=1}^j d_i  + |\gamma(\bigcup_{i=1}^j T_i)| \leq (4r+1)j$.
\end{claim}

\begin{proof}
Let 
\[
Y:=\{u \notin \bigcup_{i=1}^j T_i : \text{at least one $M$-neighbor of $u$ is in $T_j$}\}.
\]

Note that if we add $Y$ to $T_j$, then the `rainbow diameter' increases by at most $2$, regardless of the size of $Y$.  Thus, $|Y| \leq 4r-1$; otherwise, $Y \cup T_j$ contradicts the maximality of $X$ (see Lemma \ref{lemma-manystars} for an analogous argument with more detail).  Moreover, by definition, $s(T_j) \subseteq \gamma(\bigcup_{i=1}^{j-1} T_i)$ and $\gamma(\bigcup_{i=1}^j T_i) \subseteq Y \cup ( \gamma(\bigcup_{i=1}^{j-1} T_i) \setminus s(T_j))$. Thus, $|\gamma(\bigcup_{i=1}^j T_i)| \leq |Y|+|\gamma(\bigcup_{i=1}^{j-1} T_i)|-|s(T_j)|$.
It follows that, 
\begin{align*}
\sum_{i=1}^j d_i  + |\gamma(\bigcup_{i=1}^j T_i)| &\leq d_j+ |Y|-|s(T_j)| + \sum_{i=1}^{j-1} d_i +|\gamma(\bigcup_{i=1}^{j-1} T_i)| \\
&= 2+|Y| + \sum_{i=1}^{j-1} d_i + |\gamma(\bigcup_{i=1}^{j-1} T_i)| \\
&\leq (4r+1) + \sum_{i=1}^{j-1} d_i + |\gamma(\bigcup_{i=1}^{j-1} T_i)| \\
&\leq (4r+1) +(4r+1)(j-1) \\
&= (4r+1)j. \qedhere
\end{align*}
\end{proof}

To finish, we have two cases. First, suppose that $V(G) = \bigcup_{i=1}^t T_i$. Let $A$ be the set of edges of $G$ which are not $M$-edges. Note that exactly $|N|$ colors appear in $A$. If some $e \in A$ has both ends in some $T_i$, then we obtain a rainbow cycle of length at most
\begin{align*}
1+\frac{|T_i|+\sum_{c \in C_M(T_i)}(r-|c_G|)}{r}+d_i &\leq 1+ \frac{n+\sum_{c \in C(G)}(r-|c_G|)}{r} + (4r+1)f(r) \\
&\leq \frac{n+\sum_{c \in C(G)}(r-|c_G|)}{r} + (4r+2)f(r) \\
&\leq \frac{n+\sum_{c \in C(G)}(r-|c_G|)}{r} + \alpha_r.
\end{align*}

So we may assume that every edge in $A$ has ends in different $T_i$. Now, for each of the $t$ colors that appear in $A$, keep one edge of that color,  delete all other edges in $G$, and contract each $T_i$ to a single vertex to obtain $G'$. Since $G'$ has $t$ vertices with $t$ edges, there is a cycle $C$ in $G'$ of length at most $t$. Now, consider the edges of $C$ in the original graph $G$. Let the edges be $x_iy_{i+1}$ for $i \in \mathbb{Z} / t\mathbb{Z}$, such that $x_i$ and $y_i$ are contained in the same set $T_{j_i}$ in $\{T_1, \cdots, T_t\}$.

By~\cref{item3}, there is a rainbow path $P_i$ of $M$-colors between $x_i$ and $y_i$ contained in $T_{j_i}$ of length at most $|T_{j_i}|+d_{j_i}$. Joining the edges $x_i y_{i+1}$ with the paths $\{P_i\}$ yields a rainbow cycle of length at most
\begin{align*}
\frac{n+\sum_{c \in C_M(G)}(r-|c_G|)}{r} + t + \sum_{i=1}^t d_i &\leq
\frac{n+\sum_{c \in C(G)}(r-|c_G|)}{r}+f(r)+(4r+1)f(r) \\
&\leq \frac{n+\sum_{c \in C(G)}(r-|c_G|)}{r} + \alpha_r,
\end{align*}
as desired.

The remaining case is $V(G) \ne \bigcup_{i=1}^t T_i$. Let $W = V(G) \setminus \left(\bigcup_{i=1}^t T_i \right)$.  Let $D$ be the digraph with vertex set $W$ and for each $v \in W$ and each $M$-neighbor $u$ of $v$ in $W$, put an arc from $v$ to $u$ in $D$. By~\cref{item2}, every vertex $v \in W$ has at least one of its $M$-neighbors in $W$.  Therefore, $D$ has no sink.  By~\cref{item4}, at most $(4r+1)f(r)$ vertices in $W$ have an $M$-neighbor outside of $W$.  Therefore, $\df_r(D) \leq (r-1)(4r+1)f(r)$.
We now apply~\cref{cor:shen} in $D$ to obtain a directed cycle $C$ in $D$ of length at most
\begin{align*}
\frac{|W|+(r-1)(4r+1)f(r)}{r}+2r-1 &\leq \frac{n}{r} + \alpha_r.
\end{align*}
The corresponding edges of $C$ in $G$ form a rainbow cycle, which completes the proof of the lemma.
\end{proof}

\cref{lemma-manystars} and \cref{lemma-fewstars} together imply \cref{thm:main-gen} and \cref{thm:main}, as desired.

\subsection*{Acknowledgements} The second author is grateful to the  \href{https://www.matrix-inst.org.au/events/structural-graph-theory-downunder-ll/}{Structural Graph Theory Downunder II} workshop at the Mathematical Research Institute MATRIX (March 2022), for providing an ideal environment to work on this problem.   The second author also thanks Katie Clinch, Jackson Goerner, and Freddie Illingworth for very stimulating discussions.

  \let\oldthebibliography=\thebibliography
  \let\endoldthebibliography=\endthebibliography
  \renewenvironment{thebibliography}[1]{%
    \begin{oldthebibliography}{#1}%
      \setlength{\parskip}{0ex}%
      \setlength{\itemsep}{0ex}%
  }{\end{oldthebibliography}}

{\fontsize{11pt}{12pt}
\selectfont
\bibliographystyle{DavidNatbibStyle}
\bibliography{references}
}
\printindex
\end{document}